
\documentclass[11pt, oneside]{amsart}   	
\allowdisplaybreaks
\usepackage{geometry}                		
\geometry{letterpaper}                   		
\usepackage{graphicx}				
\usepackage{amssymb}

\newtheorem{theorem}{\bf Theorem}[section]
\newtheorem{proposition}[theorem]{\bf Proposition}
\newtheorem{lemma}[theorem]{\bf Lemma}
\newtheorem{corollary}[theorem]{\bf Corollary}

\newtheorem{example}[theorem]{\bf Example}

\newtheorem{definition}[theorem]{\bf Definition}
\newtheorem{remark}[theorem]{\bf Remark}

\newtheorem{problem}[theorem]{\bf Problem}

\newenvironment{proofof}[1]{\noindent{\it Proof of
#1.}}{\hfill$\square$\\\mbox{}}


\title{Symmetric polynomials over finite fields}
\author[M\'aty\'as Domokos]
{M\'aty\'as Domokos}
\address{Alfr\'ed R\'enyi Institute of Mathematics,
Re\'altanoda utca 13-15, 1053 Budapest, Hungary,
ORCID iD: https://orcid.org/0000-0002-0189-8831}
\email{domokos.matyas@renyi.hu}
\author[Botond Mikl\'osi]{Botond Mikl\'osi} 
\address{E\"otv\"os Lor\'and University, 
P\'azm\'any P\'eter s\'et\'any 1/C, 1117 Budapest, Hungary} 
\email{miklosiboti@gmail.com}

\begin{document}
\thanks{Partially supported by the Hungarian National Research, Development and Innovation Office,  NKFIH K 138828,  K 132002.}
\subjclass[2010]{Primary 13A50; Secondary 12E20}
\keywords{separating sets, elementary symmetric polynomials, finite fields}

\begin{abstract} 
It is shown that two vectors with coordinates in the finite $q$-element field of characteristic $p$ belong to the same orbit under the natural action of the symmetric group if each of the elementary symmetric polynomials of degree $p^k,2p^k,\dots,(q-1)p^k$, $k=0,1,2,\dots$ has the same value on them. This separating set of polynomial invariants for the natural permutation representation  of the symmetric group is not far from being minimal when 
$q=p$ and the dimension is large compared to $p$. A relatively small separating set of multisymmetric polynomials over the field of $q$ elements is derived. 
\end{abstract} 

\maketitle

\section{Introduction}\label{sec:intro} 

Throughout this paper $F$ stands for an arbitrary field, $q$ stands for a power of a prime $p$, and $\mathbb{F}_q$ stands for the field of $q$ elements. 
The symmetric group $S_n$ acts on the vector space $F^n$ by permuting coordinates: for $\pi\in S_n$ and $v=(v_1,\dots,v_n)\in F^n$ we have 
$\pi\cdot v=(v_{\pi^{-1}(1)},\dots,v_{\pi^{-1}(n)})$. Denote by $x_1,\dots,x_n$ the basis of the dual space of $F^n$ dual to the standard basis in $F^n$. We have an induced action of $S_n$ via $F$-algebra automorphisms on the polynomial algebra $F[x_1,\dots,x_n]$. 
In particular, $\pi\cdot x_i=x_{\pi(i)}$ for $\pi\in S_n$ and $i\in\{1,\dots,n\}$. 
The algebra of $S_n$-invariant polynomials is 
$F[x_1,\dots,x_n]^{S_n}=\{f\in F[x_1,\dots,x_n]\mid \forall \pi\in S_n:\ \pi\cdot f=f\}$. 
A subset $T$ of $F[x_1,\dots,x_n]^{S_n}$ is \emph{separating} if for any $v,w\in F^n$ with  different $S_n$-orbit there exists an element $f\in T$ such that $f(v)\neq f(w)$. 
This is a special case of the notion of \emph{separating set of polynomial invariants} of  
(not necessarily finite) groups; for the general notion and  basic facts about it we refer to \cite[Section 2.4]{derksen-kemper}. By a \emph{minimal separating set} we shall mean a  separating set none of whose proper subsets are separating (i.e. a separating set minimal with respect to inclusion). It is well known that every separating set has a finite separating subset (by a straightforward modification of the proof of \cite[Theorem 2.4.8]{derksen-kemper}), therefore any separating set contains a minimal separating subset, and a minimal separating set is necessarily finite. On the other hand, different minimal separating sets 
may have different cardinality, so a minimal separating set does not necessarily have minimal possible cardinality.

It is well known that the algebra $F[x_1,\dots,x_n]^{S_n}$ is minimally generated by the \emph{elementary symmetric polynomials} 
$s^{(n)}_k=\sum_{1\le i_1<\cdots<i_k\le n}x_{i_1}\cdots x_{i_k}$. 
Moreover, when $F$ is algebraically closed (or $F=\mathbb{R})$, then $\{s^{(n)}_k\mid k=1,\dots,n\}$ is a separating set in  $F[x_1,\dots,x_n]^{S_n}$ having the least possible number of elements (in particular, it is a minimal separating set). 
However, when $F$ is finite, in general the above separating set is not even minimal with respect to inclusion.  For a real number $v$ we shall write $\lfloor v\rfloor$ for the largest integer not strictly bigger than $v$, and write $\lceil v\rceil$ for the smallest integer which is not  strictly smaller than $v$. 
Kemper, Lopatin and Reimers \cite[Lemma 4.3]{KLR} proved that 
$\{s^{(n)}_{2^k}\mid k=0,1,\dots,\lfloor \log_2 n\rfloor\}$ 
is a minimal separating set in $\mathbb{F}_2[x_1,\dots,x_n]^{S_n}$ (and this separating set has the least possible number of elements). 
Our aim in this note is to extend the above result from the $2$-element field $\mathbb{F}_2$ to any finite field $\mathbb{F}_q$. 
For a positive integer $n$ set  
\[[n]_q:=\{jp^k \mid j\in\{1,\dots,q-1\}, \quad k\in \mathbb{Z}_{\ge 0}, \quad jp^k\le n\}.\] 
We shall prove the following: 

\begin{theorem}\label{thm:main} 
The elementary symmetric polynomials $s^{(n)}_{m}$ 
with $m\in [n]_q$ 
form a separating subset in $\mathbb{F}_q[x_1,\dots,x_n]^{S_n}$.  
\end{theorem}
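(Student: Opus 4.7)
The plan is to encode $v \in \mathbb{F}_q^n$ by its generating polynomial
\[
P_v(t) := \prod_{i=1}^n (1+v_i t) = \prod_{\alpha\in\mathbb{F}_q^\times}(1+\alpha t)^{n_\alpha(v)} = \sum_{m=0}^n s^{(n)}_m(v)\,t^m,
\]
where $n_\alpha(v) = |\{i : v_i = \alpha\}|$. The $S_n$-orbit of $v$ is determined by $(n_\alpha(v))_{\alpha\ne 0}$ together with $n$ (since $n_0(v) = n - \sum_{\alpha\ne 0} n_\alpha(v)$), hence by $P_v$, so the theorem reduces to showing that $P_v$ is determined by its coefficients at degrees $m\in[n]_q$.

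The key identity is the Frobenius $(1+\alpha t)^{p^k} = 1 + \alpha^{p^k} t^{p^k}$ in characteristic $p$. Expanding each multiplicity in base $p$ as $n_\alpha = \sum_k c_{\alpha,k} p^k$ with $c_{\alpha,k}\in\{0,\dots,p-1\}$, I would factor
\[
P_v(t) = \prod_{k\ge 0} P^{(k)}_v(t^{p^k}), \qquad P^{(k)}_v(u) := \prod_{\alpha\in\mathbb{F}_q^\times}(1+\alpha^{p^k} u)^{c_{\alpha,k}},
\]
each factor having exponents bounded by $p-1$, so that recovering every $P^{(k)}_v$ recovers $P_v$. An auxiliary lemma: any $R(u) = \prod_\alpha(1+\alpha u)^{d_\alpha}$ with $d_\alpha\in\{0,\dots,p-1\}$ is determined by $R(u)\bmod u^q$. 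Indeed, the logarithmic derivative $R'(u)/R(u) = \sum_\alpha d_\alpha\alpha/(1+\alpha u)$ has $u^{k-1}$-coefficient $(-1)^{k-1}\sum_\alpha d_\alpha\alpha^k$; from $R\bmod u^q$ these weighted power sums are recovered for $k=1,\dots,q-1$, and since the Vandermonde matrix $(\alpha^k)_{\alpha,k}$ over $\mathbb{F}_q$ is invertible, $d_\alpha\bmod p = d_\alpha$ follows. So the job is to recover $P^{(k)}_v\bmod u^q$ for each $k$.

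The main step is an induction on $k$. Assuming $P^{(0)}_v,\dots,P^{(k-1)}_v$ are already recovered, so is $L(t):=\prod_{k'<k} P^{(k')}_v(t^{p^{k'}})$, and writing $P_v = L\cdot\tilde R(t^{p^k})$ with $\tilde R(u) = P^{(k)}_v(u)\cdot U(u^p)$ (where $U$ collects the upper layers $k'>k$) gives
\[
[t^{Jp^k}]P_v = \sum_{i=0}^{J} [t^{(J-i)p^k}]L \cdot [u^i]\tilde R.
\]
For $J<p$, a size argument (any contribution to $[u^i]\tilde R$ from $U(u^p)$ requires $i\ge p$) yields $[u^i]\tilde R = [u^i]P^{(k)}_v$ for $i\le J<p$, and the triangular system with $[t^0]L=1$ then extracts $[u^j]P^{(k)}_v$ for $j=1,\dots,p-1$ from the $[n]_q$-data $[t^{jp^k}]P_v$. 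When $q=p$ (the case $r=1$) this exhausts what the auxiliary lemma needs, and the induction closes.

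The main obstacle is the case $q=p^r$ with $r\ge 2$: one still needs $[u^j]P^{(k)}_v$ for $j=p,\dots,q-1$, and at $J\ge p$ the identity above involves the still-unknown coefficients of the upper layers. I expect to handle this by a joint recursion across layers, exploiting the equivalent description of $[n]_q$ as the set of $m\le n$ whose nonzero base-$p$ digits fit inside a window of width $r$---exactly the width over which layer $k$ and layers above $k$ can interact in $[t^{Jp^k}]P_v$ for $J\le q-1$. Ordering the extractions by this window structure, and feeding back the previously-recovered low-degree data of higher layers, should allow isolating $[u^j]P^{(k)}_v$ from each $[t^m]P_v$ with $m\in[n]_q$. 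Carrying out this disentangling is the heart of the argument.
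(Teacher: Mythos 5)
Your reduction to generating polynomials, the single-digit layer factorization, and the triangular extraction at degrees $Jp^k$ all parallel the paper's argument, and your logarithmic-derivative/Vandermonde proof of the auxiliary lemma is an attractive alternative to the paper's root-counting route (Lemma~\ref{lemma:distinct roots} through Lemma~\ref{lemma:i_1=j_1}). For $q=p$ your argument is essentially complete. For $q=p^r$ with $r\ge 2$, however, there is a genuine gap, and you have located it yourself: the recovery of $[u^j]P^{(k)}_v$ for $j=p,\dots,q-1$ is never carried out, only a plan (``joint recursion across layers'', ``disentangling'') is sketched. That plan is not obviously executable: for $J\ge p$ the triangular system only delivers $[u^J]\tilde R$, which mixes the unknown layer $k$ with the unknown layers above it, and isolating $[u^J]P^{(k)}_v$ by ``feeding back low-degree data of higher layers'' requires information about those layers that is itself only accessible through the same contaminated quantities --- a circularity you do not resolve.

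The fix is a reorganization rather than new machinery, and it is what the paper does. Do not attempt to recover the single-digit layer $P^{(k)}_v$ in full. Your triangular system at degrees $Jp^k$, $J=1,\dots,q-1$, already determines $[u^J]\tilde R$ for $J=1,\dots,q-1$, where
\[
\tilde R(u)=\prod_{k'\ge k}P^{(k')}_v\bigl(u^{p^{k'-k}}\bigr)=\prod_{\alpha\in\mathbb{F}_q^{\times}}\bigl(1+\alpha^{p^k}u\bigr)^{\lfloor n_\alpha/p^k\rfloor}
\]
is the entire upper tail, a product of root factors with \emph{unbounded} exponents. Now observe that your own proof of the auxiliary lemma never uses the hypothesis $d_\alpha\le p-1$: the logarithmic derivative only sees $d_\alpha$ modulo $p$, and the Vandermonde inversion recovers $d_\alpha\bmod p$ from the coefficients of $R$ in degrees $1,\dots,q-1$ for arbitrary non-negative exponents. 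Applying this strengthened lemma to $\tilde R$ yields $\lfloor n_\alpha/p^k\rfloor\bmod p=c_{\alpha,k}$, which is exactly the $k$-th digit you need, with no disentangling of higher layers; the induction then closes for every prime power $q$. This is precisely the role of the paper's $\mathcal{O}^{[k]/p^{k+1}}$ together with its Lemma~\ref{lemma:0}: recover the first $q-1$ coefficients of the whole tail, then extract only the bottom digit via a mod-$p$ statement.
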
 

While the study of separating sets of polynomial invariants of groups became rather popular in the past two decades, 
as far as we know, the recent paper \cite{KLR} is the first studying separating sets of polynomial invariants over finite fields; see also that paper for motivation (for example, for the connection to the graph isomorphism problem). 
On the other hand, 
Theorem~\ref{thm:main} has an equivalent reformulation not referring to the notion of separating sets of polynomial invariants, but as a statement about univariate polynomials over finite fields as follows: 

\begin{theorem}\label{thm:main-reformulation} 
Let $f,g\in \mathbb{F}_q[x]$ be monic polynomials of degree $n$, such that both $f$ and $g$ split as a product of root factors over $\mathbb{F}_q$. 
Assume that for all $j\in [n]_q$, the degree $n-j$ coeficient of $f$ coincides with the degree $n-j$ coefficient of $g$. Then we have $f=g$.  
\end{theorem}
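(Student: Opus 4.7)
Theorem~\ref{thm:main-reformulation} is a dual reformulation of Theorem~\ref{thm:main} in the language of univariate polynomials, so my plan is to make the translation precise and invoke Theorem~\ref{thm:main}.

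Since $f$ and $g$ are monic of degree $n$ and split over $\mathbb{F}_q$, I would first list their roots with multiplicity as vectors $v=(v_1,\dots,v_n),w=(w_1,\dots,w_n)\in\mathbb{F}_q^n$, so that $f(x)=\prod_{i=1}^n(x-v_i)$ and $g(x)=\prod_{i=1}^n(x-w_i)$. Expanding these products via Vieta's identity yields
\[f(x)=\sum_{j=0}^n (-1)^j s^{(n)}_j(v)\,x^{n-j},\]
and analogously for $g$, where $s_0^{(n)}:=1$. Hence the coefficient of $x^{n-j}$ in $f$ equals $(-1)^j s^{(n)}_j(v)$, and the hypothesis that the $x^{n-j}$ coefficients of $f$ and $g$ agree for every $j\in [n]_q$ translates precisely into $s^{(n)}_j(v)=s^{(n)}_j(w)$ for all $j\in[n]_q$.

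Next I would invoke Theorem~\ref{thm:main}: the family $\{s^{(n)}_m:m\in[n]_q\}$ is a separating subset of $\mathbb{F}_q[x_1,\dots,x_n]^{S_n}$, so the equalities above force $v$ and $w$ to lie in a common $S_n$-orbit. That is, there exists $\pi\in S_n$ with $w_i=v_{\pi^{-1}(i)}$ for every $i$. Since a product $\prod_i(x-v_i)$ is unchanged by permuting its index set, this gives $g(x)=\prod_i(x-w_i)=\prod_i(x-v_{\pi^{-1}(i)})=\prod_i(x-v_i)=f(x)$, yielding $f=g$.

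The substantive work is entirely packaged in Theorem~\ref{thm:main}, so no serious obstacle is expected in the proof of Theorem~\ref{thm:main-reformulation} itself. The one delicate point worth highlighting is that the assumption that $f$ and $g$ \emph{split over $\mathbb{F}_q$} (and not merely over an algebraic closure $\overline{\mathbb{F}_q}$) is exactly what ensures the roots lie in $\mathbb{F}_q^n$, placing us squarely in the setting in which Theorem~\ref{thm:main} applies; were $\mathbb{F}_q$ replaced by its closure, a separate argument would be required since the $S_n$-orbit structure and the notion of separating set is different there.
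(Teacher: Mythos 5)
Your proof is correct and is exactly the translation the paper has in mind: the paper gives no separate proof of Theorem~\ref{thm:main-reformulation}, treating it as an evident reformulation of Theorem~\ref{thm:main} via precisely the Vieta correspondence between root vectors and coefficients that you spell out. No issues.
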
 

We also investigate wether the separating set given in Theorem~\ref{thm:main} is minimal. It turns out that it is minimal (with respect to inclusion)  
for $q=3,4,5$ with arbitrary $n$ and for $q=7$ with ``most" $n$ (see Corollary~\ref{cor:p=3,5,7}). 
However, computer calculations show that it is not always minimal (see the case $q=7$, $n=5$ in Corollary~\ref{cor:p=3,5,7},  or the results in Section~\ref{subsec:p=11} for $p=11$). On the other hand, we point out in Proposition~\ref{prop:d(n)} that when $q=p$ and $n$ is large compared to $p$, then in a certain sense, the separating set given in Theorem~\ref{thm:main} is not far from being minimal. 

In Section~\ref{sec:multisymmetric} we turn to the study of multisymmetric polynomials over 
the field $\mathbb{F}_q$. Separating sets of multisymmetric polynomials are studied in 
\cite{reimers}, \cite{LR}, and a minimal separating set of multisymmetric polynomials over $\mathbb{F}_2$ is given in \cite[Theorem 4.8]{KLR}. 
Here we shall exploit Theorem~\ref{thm:main} to obtain a relatively small separating set of multisymmetric polynomials over $\mathbb{F}_q$ for an arbitrary prime power $q$ in Theorem~\ref{thm:multisymmetric}. 

\section{Preliminaries on polynomials} 

\begin{lemma}\label{lemma:distinct roots} 
Let $F$ be an arbitrary field, $f=\sum_{i=0}^dc_ix^i\in F[x]$ a polynomial whose formal derivative $f'$ is not zero (i.e. $c_i\neq 0$ for some $i$ not divisible by 
the characteristic of $F$). Assume that $c_0\neq 0$ and $c_1=\dots=c_m=0$. 
Then $f$ has at least $m+1$ distinct roots in the algebraic closure of $F$. 
\end{lemma}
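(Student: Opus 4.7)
The plan is to exploit the two pieces of information $c_0\neq 0$ and $c_1=\cdots=c_m=0$ in a complementary way: the first tells us that $0$ is not a root of $f$, while the second translates into strong divisibility at $0$ for the formal derivative $f'$. Writing $f'(x)=\sum_{i\ge 1}ic_ix^{i-1}$, the vanishing of $c_1,\dots,c_m$ forces $x^m\mid f'$, and the hypothesis that $f'\neq 0$ ensures the following degree estimates are not vacuous.

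Next I would let $\alpha_1,\dots,\alpha_r\in \overline{F}$ be the distinct roots of $f$, with multiplicities $e_1,\dots,e_r$, so $e_1+\cdots+e_r=\deg f$. The target is $r\ge m+1$. A routine computation with the Leibniz rule on $f=(x-\alpha_i)^{e_i}h_i$ with $h_i(\alpha_i)\neq 0$ shows that $(x-\alpha_i)^{e_i-1}$ divides $f'$ (regardless of whether $e_i$ is divisible by the characteristic). Hence the product $\prod_{i=1}^r(x-\alpha_i)^{e_i-1}$ divides $f'$ in $\overline{F}[x]$.

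Now the assumption $c_0\neq 0$ pays off: each $\alpha_i$ is nonzero, so $x$ is coprime to every factor $x-\alpha_i$, and consequently
\[x^{m}\cdot\prod_{i=1}^r(x-\alpha_i)^{e_i-1}\quad \text{divides}\quad f'.\]
Taking degrees, and using that $f'\ne 0$ implies $\deg f'\le \deg f-1$, I get
\[m+\sum_{i=1}^r(e_i-1)=m+\deg f-r\ \le\ \deg f-1,\]
which rearranges to $r\ge m+1$, as required.

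The only delicate point is step (iv), checking that $(x-\alpha_i)^{e_i-1}\mid f'$ uniformly in positive characteristic; this is standard, and the coprimality of $x$ with the $x-\alpha_i$ (guaranteed by $c_0\neq 0$) is what makes the two divisibility contributions combine additively in degree. The hypothesis that $f'\neq 0$ is used exclusively to justify the inequality $\deg f'\le \deg f-1$ in the final step.
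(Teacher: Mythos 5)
Your proof is correct and follows essentially the same route as the paper: both arguments rest on the observations that $c_1=\cdots=c_m=0$ forces $x^m\mid f'$, that $c_0\neq 0$ makes $x$ coprime to $f$ (so the $x^m$ contribution is ``extra''), and that $\deg f'\le \deg f-1$. The only cosmetic difference is that the paper invokes the inequality $r\ge \deg f-\deg\gcd(f,f')$ as a known fact and works with $\gcd(f,f')=\gcd(f,h)$ where $f'=x^mh$, whereas you reprove that inequality inline by showing $\prod_i(x-\alpha_i)^{e_i-1}$ divides $f'$ and combining it with $x^m$ by coprimality.
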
 

\begin{proof}
The formal derivative of $f$ is 
\[f'=(m+1)c_{m+1}x^m+\sum_{j=m+2}^djc_jx^{j-1}=x^mh\] 
for some non-zero polynomial $h\in F[x]$. 
We have 
\begin{equation}\label{eq:deg(h)}\deg(h)=\deg(f')-m\le \deg(f)-1-m.
\end{equation}  
Recall that the number of distinct roots of $f$ in the algebraic closure of $F$ is greater than or equal to the difference of the degree of $f$ and the degree of the greatest common divisor 
$\mathrm{gcd}_{F[x]}(f,f')$
of $f$ and $f'$. As $x$ does not divide $f$ 
(recall that $c_0\neq 0$), we have 
\[\mathrm{gcd}_{F[x]}(f,f')=\mathrm{gcd}_{F[x]}(f,h).\]
Consequently, 
\[\deg(\mathrm{gcd}_{F[x]}(f,f'))\le \deg(h) \text{ and } \deg(f)-\deg(\mathrm{gcd}_{F[x]}(f,f'))
\ge \deg(f)-\deg(h).\]
It follows by \eqref{eq:deg(h)} that the number of distinct roots of $f$ in the algebraic closure of $f$ is at least 
\[\deg(f)-\deg(h)\ge m+1.\] 
\end{proof} 

We shall denote by $\mathbb{F}_q^{\times}$ the set of non-zero elements in $\mathbb{F}_q$

\begin{corollary} \label{cor:i_1=0}
Given a map $\mathcal{O}:\mathbb{F}_q^{\times} \to \mathbb{Z}_{\ge 0}$ 
consider the polynomial 
\[G_{\mathcal{O}}(x):=\prod_{a\in \mathbb{F}_q^{\times}}(1+ax)^{\mathcal{O}(a)}\in \mathbb{F}_q[x].\] 
Assume that all terms of $G_{\mathcal{O}}(x)$ of degree $1,2,\dots,q-1$ have coefficient zero. Then $p$ divides $\mathcal{O}(a)$ for all $a\in \mathbb{F}_q^{\times}$. 
\end{corollary}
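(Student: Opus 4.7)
The plan is to apply Lemma~\ref{lemma:distinct roots} to $f:=G_{\mathcal{O}}$. Its constant term is $1\ne 0$, and by hypothesis the coefficients of $x,x^2,\dots,x^{q-1}$ vanish. Were the formal derivative $G'_{\mathcal{O}}$ nonzero, the lemma (applied with $m=q-1$) would guarantee at least $q$ distinct roots of $G_{\mathcal{O}}$ in the algebraic closure $\overline{\mathbb{F}_q}$. But by construction every root of $G_{\mathcal{O}}$ has the form $-a^{-1}$ for some $a\in\mathbb{F}_q^{\times}$, yielding at most $q-1$ distinct roots. This contradiction forces $G'_{\mathcal{O}}=0$.

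From here I would invoke the standard fact that if a polynomial $\prod_i(x-\alpha_i)^{m_i}$ over a field of characteristic $p$ with pairwise distinct $\alpha_i$ has zero formal derivative, then $p\mid m_i$ for every $i$. Writing $G_{\mathcal{O}}(x)=\prod_{a\in\mathbb{F}_q^{\times}}a^{\mathcal{O}(a)}\bigl(x-(-a^{-1})\bigr)^{\mathcal{O}(a)}$, whose roots $-a^{-1}$ are pairwise distinct for distinct $a$, this fact yields $p\mid\mathcal{O}(a)$ for every $a\in\mathbb{F}_q^{\times}$, as required.

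Should the cited fact require an on-the-spot justification, I would derive it from the logarithmic derivative identity $f'/f=\sum_i m_i/(x-\alpha_i)$: clearing denominators turns $f'=0$ into the polynomial relation $\sum_i m_i\prod_{j\ne i}(x-\alpha_j)=0$, and evaluating at $x=\alpha_k$ leaves only the $k$-th summand, yielding $m_k\prod_{j\ne k}(\alpha_k-\alpha_j)=0$, whence $m_k\equiv 0\pmod p$ by distinctness of the $\alpha_j$.

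The main point to watch is the first step — checking that Lemma~\ref{lemma:distinct roots} applies with exactly the value $m=q-1$ needed to beat the pigeonhole bound $|\mathbb{F}_q^{\times}|=q-1$ on the number of possible roots of $G_{\mathcal{O}}$. Once the vanishing of the derivative is established, the divisibility conclusion follows mechanically from the explicit factorisation of $G_{\mathcal{O}}$, with no Wilson-type identities or further arithmetic required.
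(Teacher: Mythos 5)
Your proof is correct and follows essentially the same route as the paper: both arguments hinge on applying Lemma~\ref{lemma:distinct roots} with $m=q-1$ to force at least $q$ distinct roots, against the bound of $q-1$ coming from the fact that every root is of the form $-a^{-1}$ with $a\in\mathbb{F}_q^{\times}$. The only difference is organizational: the paper assumes $p\nmid\mathcal{O}(a)$ for some $a$ and asserts $G'_{\mathcal{O}}\neq 0$ without further comment, whereas you first conclude $G'_{\mathcal{O}}=0$ and then justify the equivalence with $p\mid\mathcal{O}(a)$ via the logarithmic derivative — a correct and slightly more detailed account of the same step.
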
 

\begin{proof} 
Suppose for contradiction that $p$ does not divide $\mathcal{O}(a)$ for some $a\in \mathbb{F}_q^{\times}$. 
Then the formal derivative $G_{\mathcal{O}}'$ of $G_{\mathcal{O}}$ is not the zero polynomial in $\mathbb{F}_q[x]$. 
Thus Lemma~\ref{lemma:distinct roots} applies for $G_{\mathcal{O}}$, and we conclude that $G_{\mathcal{O}}$ has at least $(q-1)+1=q$ distinct roots in the algebraic closure of $\mathbb{F}_q$. However, 
$G_{\mathcal{O}}$ splits as a product of root factors already over $\mathbb{F}_q$, 
and all its roots are non-zero, so 
$G_{\mathcal{O}}$ has at most $q-1$ distinct roots in $\mathbb{F}_q$ (and hence in its algebraic closure), a contradiction. 
\end{proof}

\begin{lemma} \label{lemma:i_1=j_1} 
Take two maps $\mathcal{O},\mathcal{P}:\mathbb{F}_q^{\times}\to\{0,1,\dots,q-1\}$ and consider the polynomials 
\[G_{\mathcal{O}}(x):=\prod_{a\in \mathbb{F}_q^{\times}}(1+ax)^{\mathcal{O}(a)}=\sum_jb_jx^j 
\quad \text{ and } \quad 
G_{\mathcal{P}}(x):=\prod_{a\in \mathbb{F}_q^{\times}}(1+ax)^{\mathcal{P}(a)}
=\sum_jc_jx^j.\]  
Suppose that $b_1=c_1,b_2=c_2,\dots,b_{q-1}=c_{q-1}$. 
Then we have $\mathcal{O}(a)\equiv \mathcal{P}(a)$ modulo $p$ for all $a\in \mathbb{F}_q^{\times}$. 
\end{lemma}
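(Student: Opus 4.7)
The plan is to reduce the statement to Corollary~\ref{cor:i_1=0}: I will multiply $G_{\mathcal{O}}(x)$ by a carefully chosen polynomial of the form $\prod_a(1+ax)^{e(a)}$ so that the product, which is again a polynomial of the form $G_{\mathcal{R}}$ for some non-negative integer-valued function $\mathcal{R}$, has all coefficients of degree $1,\dots,q-1$ equal to zero. Corollary~\ref{cor:i_1=0} will then force $p\mid\mathcal{R}(a)$, and with the right choice of exponents this will translate back to $\mathcal{O}(a)\equiv\mathcal{P}(a)\pmod p$.

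Concretely, I would define $\mathcal{R}\colon\mathbb{F}_q^{\times}\to\mathbb{Z}_{\ge 0}$ by $\mathcal{R}(a):=\mathcal{O}(a)+q-\mathcal{P}(a)$, which is non-negative because $\mathcal{P}(a)\le q-1$. Setting $D(x):=G_{\mathcal{O}}(x)-G_{\mathcal{P}}(x)$, I write
\[G_{\mathcal{R}}(x)=G_{\mathcal{O}}(x)\cdot\prod_{a\in\mathbb{F}_q^{\times}}(1+ax)^{q-\mathcal{P}(a)}=G_{\mathcal{P}}(x)\cdot\prod_{a\in\mathbb{F}_q^{\times}}(1+ax)^{q-\mathcal{P}(a)}+D(x)\cdot\prod_{a\in\mathbb{F}_q^{\times}}(1+ax)^{q-\mathcal{P}(a)}.\]
The first summand telescopes to $\prod_{a\in\mathbb{F}_q^{\times}}(1+ax)^q$, while the second summand is divisible by $x^q$: by hypothesis $b_j=c_j$ for $j=1,\dots,q-1$ and $b_0=c_0=1$, so $D(x)$ itself is divisible by $x^q$.

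The key identity is $\prod_{a\in\mathbb{F}_q^{\times}}(1+ax)=1-x^{q-1}$, which I would verify by the substitution $y=1/x$ together with the fact that the elements of $\mathbb{F}_q^{\times}$ are exactly the roots of $y^{q-1}-1$. Using the Frobenius (applicable because $q$ is a power of $p$), this yields
\[\prod_{a\in\mathbb{F}_q^{\times}}(1+ax)^q=(1-x^{q-1})^q=1-x^{q(q-1)}\in\mathbb{F}_q[x].\]
Combining the two summands, $G_{\mathcal{R}}(x)$ has constant term $1$ and vanishing coefficients in degrees $1,\dots,q-1$. Corollary~\ref{cor:i_1=0} then gives $p\mid\mathcal{R}(a)=\mathcal{O}(a)-\mathcal{P}(a)+q$ for every $a\in\mathbb{F}_q^{\times}$, and since $p\mid q$, this is exactly $\mathcal{O}(a)\equiv\mathcal{P}(a)\pmod p$.

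I do not anticipate any serious obstacle once the auxiliary polynomial is set up correctly; the only mild subtlety is choosing an exponent shift that is both large enough to keep $\mathcal{R}(a)$ non-negative and divisible by $p$ so that it disappears in the final congruence. The exponent $q$ happens to satisfy both conditions simultaneously, and the identity $\prod_{a\in\mathbb{F}_q^{\times}}(1+ax)=1-x^{q-1}$ is the single ingredient that makes the telescoping work.
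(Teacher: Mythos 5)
Your proof is correct, and at its core it is the same reduction the paper uses: multiply by complementary powers $(1+ax)^{q-\bullet}$ so that Frobenius collapses the $\mathcal{P}$-part to a polynomial in $x^q$ with constant term $1$, then invoke Corollary~\ref{cor:i_1=0}. The one genuine difference is that you apply this trick to all of $G_{\mathcal{P}}$ at once, setting $\mathcal{R}(a)=\mathcal{O}(a)+q-\mathcal{P}(a)$, whereas the paper first extracts $D=\gcd(G_{\mathcal{O}},G_{\mathcal{P}})$, writes $g=G_{\mathcal{O}}/D$ and $h=G_{\mathcal{P}}/D$ supported on disjoint sets $A,B$, and applies the complementary-exponent trick only to the factors of $h$, then translates the conclusion back through $D$. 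Your version shows that the gcd extraction is dispensable: since Corollary~\ref{cor:i_1=0} places no upper bound on the exponents, there is no harm in letting $\mathcal{R}(a)$ be as large as $2q-1$, and the final congruence comes out directly because $p\mid q$. This is a mild but real streamlining. One small remark: the identity $\prod_{a\in\mathbb{F}_q^{\times}}(1+ax)=1-x^{q-1}$, while true, is more than you need; Frobenius already gives $\prod_a(1+ax)^{q}=\prod_a(1+a^qx^q)$, a polynomial in $x^q$ with constant term $1$, which is exactly what the argument requires (and is how the paper phrases the corresponding step).
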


\begin{proof} 
By assumption the polynomial $G_{\mathcal{O}}-G_{\mathcal{P}}$ is divisible in $\mathbb{F}_q[x]$ by $x^q$. Denote by $D$ the greatest common divisor in $\mathbb{F}_q[x]$ of  
$G_{\mathcal{O}}$ and $G_{\mathcal{P}}$. 
Set $g:=G_{\mathcal{O}}/D$ and $h:=G_{\mathcal{P}}/D$, so $x^q$ divides $(g-h)D$. 
As $x$ does not divide $G_{\mathcal{O}}$, it does not divide $D$, and therefore  $x^q$ divides $g-h$ in $\mathbb{F}_q[x]$. There exist some disjoint subsets $A,B$ of $\mathbb{F}_q^{\times}$ such that 
\[g=\prod_{a\in A}(1+ax)^{k_a}\ \text{ and }\ 
h=\prod_{b\in B}(1+bx)^{m_b},\] 
where $k_a$, $m_b$ are positive integers less than or equal to $q-1$. 
Set 
\[f:=g\prod_{b\in B}(1+bx)^{q-m_b}.\] 
Then 
\[f=(g-h)\prod_{b\in B}(1+bx)^{q-m_b}+\prod_{b\in B}(1+bx)^q.\] 
The first summand on the right hand side above is divisible by $x^q$, whereas the second summand minus $1$ is also divisible by $x^q$. This implies that $f-1$ is divisible by $x^q$. 
On the other hand, we have 
\[f=\prod_{a\in A}(1+ax)^{k_a}\prod_{b\in B}(1+bx)^{q-m_b}.\] 
By Corollary~\ref{cor:i_1=0} we conclude that $p$ divides $k_a$ for all  $a\in A$ 
and $p$ divides $q-m_b$  (and hence $p$ divides $m_b$) for all $b\in B$. 
Define $\mathcal{R}:\mathbb{F}_q^{\times}\to \{0,1,\dots,q-1\}$ by 
\[D=\prod_{c\in \mathbb{F}_q^{\times}}(1+cx)^{\mathcal{R}(c)}.\] 
Then 
\[\mathcal{O}(c)=\begin{cases} 
\mathcal{R}(c)+k_c &\text{ for }c\in A \\
\mathcal{R}(c) &\text{ for }c\notin A\end{cases}
\qquad\text{ and  }\qquad\mathcal{P}(c)=
\begin{cases} 
\mathcal{R}(c)+m_c &\text{ for }c\in B \\
\mathcal{R}(c) &\text{ for }c\notin B.\end{cases}\]
 As $p$ divides $k_a$ and $m_b$ for all $a\in A$ and $b\in B$, this clearly implies that 
 both $\mathcal{O}(c)$ and $\mathcal{P}(c)$ are congruent to $\mathcal{R}(c)$ modulo $p$, 
hence  $p$ divides $\mathcal{O}(c)-\mathcal{P}(c)$ for all $c\in \mathbb{F}_q^{\times}$. 
\end{proof} 


\section{A separating set of elementary symmetric polynomials} \label{sec:a separating set} 

Denote by $\Pi_{q,n}$ the set of functions $\mathcal{O}:\mathbb{F}_q^{\times}\to \mathbb{Z}_{\ge 0}$ satisfying $\sum_{a\in\mathbb{F}_q^{\times}}\mathcal{O}(a)\le n$. 
There is a natural bijection between $\Pi_{q,n}$ and the set of $S_n$-orbits in $\mathbb{F}_q^n$; namely, associate with $\mathcal{O}\in \Pi_{q,n}$ the set of vectors in $\mathbb{F}_q^n$ having $\mathcal{O}(a)$ coordinates equal to $a$ for each $a\in \mathbb{F}_q^{\times}$, and having $n-\sum_{a\in\mathbb{F}_q^{\times}}\mathcal{O}(a)$ zero coordinates. 
We shall write $s_k(\mathcal{O})$ for the value of the elementary symmetric polynomial 
$s^{(n)}_k\in \mathbb{F}_q[x_1,\dots,x_n]$ on the vectors in $\mathbb{F}_q^n$ that belong to the orbit labelled by $\mathcal{O}$. For $k>n$ we set $s_k(\mathcal{O})=0$, and set $s_0(\mathcal{O})=1$. 

For $\mathcal{O}\in \Pi_{q,n}$ and $a\in \mathbb{F}_q^{\times}$ we have 
\[\mathcal{O}(a)=\sum_{j=0}^{\lfloor \log_p n\rfloor}\mathcal{O}(a)_jp^j\] 
for some uniquely determined integers $\mathcal{O}(a)_j\in\{0,1,\dots,p-1\}$ 
(i.e. the numbers $\mathcal{O}(a)_j$ are the digits of the non-negative integer $\mathcal{O}(a)$ in the number system with base $p$). 
For $k=0,1,\dots,\lfloor \log_p n\rfloor$ denote by 
$\mathcal{O}_{\{k\}}\in \Pi_{q,n}$ the function given by 
\[\mathcal{O}_{\{k\}}(a)=\sum_{j=0}^k\mathcal{O}(a)_jp^j, \quad a\in \mathbb{F}_q^{\times}\] 
and let $\mathcal{O}^{[k]}\in \Pi_{q,n}$ be the function given by 
\[\mathcal{O}^{[k]}(a)=\mathcal{O}(a)-\mathcal{O}_{\{k\}}(a), \quad a\in \mathbb{F}_q^{\times},\]
whereas 
$\mathcal{O}^{[k]/p^{k+1}}\in \Pi_{q,n}$ is the function given by  
\[\mathcal{O}^{[k]/p^{k+1}}(a)=\frac{\mathcal{O}^{[k]}(a)}{p^{k+1}}
, \quad a\in \mathbb{F}_q^{\times}.\]

For $\mathcal{O}\in \Pi_{q,n}$ set 
\begin{equation}\label{eq:G_O}
G_{\mathcal{O}}(x):=\sum_{j=0}^ns_j(\mathcal{O})x^j=
\prod_{a\in \mathbb{F}_q^{\times}}(1+ax)^{\mathcal{O}(a)}\in \mathbb{F}_q[x].
\end{equation}  
With this notation we have the obvious equalities 
\begin{equation}\label{eq:G-factorization} 
G_{\mathcal{O}}(x)=G_{\mathcal{O}_{\{k\}}}(x)\cdot G_{\mathcal{O}^{[k]}}(x)\in \mathbb{F}_q[x]
\end{equation}
and 
\begin{equation} \label{eq:G-p-power}
G_{\mathcal{O}^{[k]}}(x)=G_{\mathcal{O}^{[k]/p^{k+1}}}(x)^{p^{k+1}}. 
\end{equation} 

\begin{lemma}\label{lemma:0} 
Suppose that $\mathcal{O},\mathcal{P}\in \Pi_{q,n}$ satisfy 
\[s_j(\mathcal{O})=s_j(\mathcal{P}) \text{ for }j=1,2,\dots,q-1.\] 
Then $\mathcal{O}_{\{0\}}=\mathcal{P}_{\{0\}}$ (i.e. $\mathcal{O}(a)$ is congruent to 
$\mathcal{P}(a)$ modulo $p$ for all $a\in \mathbb{F}_q^{\times}$). 
\end{lemma}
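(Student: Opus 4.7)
The plan is to reduce the statement to Lemma~\ref{lemma:i_1=j_1}, which gives the desired congruence conclusion but only for functions $\mathcal{O},\mathcal{P}$ taking values in $\{0,1,\dots,q-1\}$. Here $\mathcal{O},\mathcal{P}$ may take arbitrary non-negative integer values, so the main task is to strip off the ``high part'' of these functions without disturbing the coefficients of $G_\mathcal{O}$ and $G_\mathcal{P}$ in degrees $1,\dots,q-1$.

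Write $q=p^e$ and decompose, using the $p$-adic digits of $\mathcal{O}(a)$, as $\mathcal{O}=\mathcal{O}_{\{e-1\}}+\mathcal{O}^{[e-1]}$ in the sense of the paragraph preceding the lemma. By construction $\mathcal{O}_{\{e-1\}}(a)\in\{0,1,\dots,q-1\}$ and $\mathcal{O}^{[e-1]/p^e}(a)=\mathcal{O}^{[e-1]}(a)/q$ is a non-negative integer. Combining \eqref{eq:G-factorization} and \eqref{eq:G-p-power} gives
\[G_{\mathcal{O}}(x)=G_{\mathcal{O}_{\{e-1\}}}(x)\cdot G_{\mathcal{O}^{[e-1]/p^e}}(x)^{q},\]
and the Frobenius identity in $\mathbb{F}_q[x]$ forces $G_{\mathcal{O}^{[e-1]/p^e}}(x)^{q}\in\mathbb{F}_q[x^q]$. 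Hence for every $j$ with $1\le j\le q-1$, the coefficient of $x^j$ in $G_{\mathcal{O}}$ coincides with the coefficient of $x^j$ in $G_{\mathcal{O}_{\{e-1\}}}$, and the analogous statement holds for $\mathcal{P}$.

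Combined with the hypothesis $s_j(\mathcal{O})=s_j(\mathcal{P})$ for $j=1,\dots,q-1$, this shows that the polynomials $G_{\mathcal{O}_{\{e-1\}}}$ and $G_{\mathcal{P}_{\{e-1\}}}$ agree in all coefficients of degree $1,\dots,q-1$. Since $\mathcal{O}_{\{e-1\}}$ and $\mathcal{P}_{\{e-1\}}$ take values in $\{0,1,\dots,q-1\}$, Lemma~\ref{lemma:i_1=j_1} applies and yields $\mathcal{O}_{\{e-1\}}(a)\equiv \mathcal{P}_{\{e-1\}}(a)\pmod{p}$ for every $a\in\mathbb{F}_q^{\times}$. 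Reducing modulo $p$ picks out the lowest base-$p$ digit, so $\mathcal{O}_{\{e-1\}}(a)\bmod p=\mathcal{O}(a)_0=\mathcal{O}_{\{0\}}(a)$, and likewise for $\mathcal{P}$; as both sides lie in $\{0,\dots,p-1\}$ the congruence becomes an equality, giving $\mathcal{O}_{\{0\}}=\mathcal{P}_{\{0\}}$.

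The only genuine point to verify is that the ``$q$-th power tail'' $G_{\mathcal{O}^{[e-1]/p^e}}(x)^{q}$ contributes nothing in degrees below $q$, which is immediate from Frobenius; everything else is a straightforward translation between the bookkeeping of Section~\ref{sec:a separating set} and the hypothesis of Lemma~\ref{lemma:i_1=j_1}. I do not anticipate a serious obstacle here; the real work was done in establishing Lemma~\ref{lemma:i_1=j_1} and Corollary~\ref{cor:i_1=0}, and this lemma simply puts that machinery to use after a base-$p$ digit truncation.
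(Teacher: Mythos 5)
Your argument is correct and follows essentially the same route as the paper: factor $G_{\mathcal{O}}$ as $G_{\mathcal{O}_{\{e-1\}}}$ times a $q$-th power lying in $1+x^q\mathbb{F}_q[x]$, transfer the coefficient hypotheses to the truncated functions, and invoke Lemma~\ref{lemma:i_1=j_1}. The only cosmetic difference is the indexing convention ($q=p^e$ here versus $q=p^{e+1}$ in the paper), and the one small point worth stating explicitly is that the tail has constant term $1$, which is what makes the degree-$j$ coefficients match exactly rather than up to a scalar.
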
 

\begin{proof} 
We have $q=p^{e+1}$ for some nonnegative integer $e$. 
By \eqref{eq:G-factorization} and \eqref{eq:G-p-power} we have 
\[\mathcal{G}_{\mathcal{O}}(x)=G_{\mathcal{O}_{\{e\}}}(x)\cdot G_{\mathcal{O}^{[e]/q}}(x)^q.\] 
The coefficient of $x^j$ in $G_{\mathcal{O}^{[e]/q}}(x)^q$ is non-zero only if $q$ divides $j$, and the constant term of $G_{\mathcal{O}^{[e]/q}}(x)^q$ is $1$. 
Comparing the degree $j$ coefficients of the two sides of the above equality for $j=1,\dots,q-1$ we get 
\begin{equation}\label{eq:s(O_{k})}s_j(\mathcal{O})=s_j(\mathcal{O}_{\{e\}}) \text{ for }j=1,\dots,q-1.\end{equation} 
Similarly we have 
\begin{equation} \label{eq:s(P_{k})} 
s_j(\mathcal{P})=s_j(\mathcal{P}_{\{e\}}) \text{ for }j=1,\dots,q-1.\end{equation} 
Note that $\mathcal{O}_{\{e\}}$ and $\mathcal{P}_{\{e\}}$ are maps from $\mathbb{F}_q^{\times}$ to $\{0,1,\dots,q-1\}$.  
Moreover, for $j=1,\dots,q-1$ the equality $s_j(\mathcal{O})=s_j(\mathcal{P})$ 
implies by \eqref{eq:s(O_{k})} and \eqref{eq:s(P_{k})} that the degree $j$ coefficient 
of $G_{\mathcal{O}_{\{e\}}}(x)$ coincides with the degree $j$ coefficient of 
$G_{\mathcal{P}_{\{e\}}}(x)$. 
Consequently, Lemma~\ref{lemma:i_1=j_1} applies for $\mathcal{O}_{\{e\}}$ and 
$\mathcal{P}_{\{e\}}$, and yields the desired equality 
$\mathcal{O}_{\{0\}}=\mathcal{P}_{\{0\}}$.  
\end{proof} 

\begin{lemma}\label{lemma:inductive step} 
Suppose that $\mathcal{O},\mathcal{P}\in \Pi_{q,n}$ and for some 
$k\in \{0,1,\dots,\lfloor \log_p n\rfloor-1\}$ 
we have $\mathcal{O}_{\{k\}}=\mathcal{P}_{\{k\}}$ and 
$s_{jp^{k+1}}(\mathcal{O})=s_{jp^{k+1}}(\mathcal{P})$ for $j=1,2,\dots,q-1$. 
Then  $\mathcal{O}_{\{k+1\}}=\mathcal{P}_{\{k+1\}}$. 
\end{lemma}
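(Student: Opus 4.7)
The plan is to combine the factorizations \eqref{eq:G-factorization} and \eqref{eq:G-p-power} with a coefficient-extraction trick that reduces the hypothesis to an identity of polynomials in an auxiliary variable $y$, to which Lemma~\ref{lemma:0} then applies.

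First I set $H(x):=G_{\mathcal{O}_{\{k\}}}(x)=G_{\mathcal{P}_{\{k\}}}(x)$ (these coincide by hypothesis). Using $(1+ax)^{p^{k+1}}=1+a^{p^{k+1}}x^{p^{k+1}}$ in characteristic $p$, I rewrite $G_{\mathcal{O}^{[k]/p^{k+1}}}(x)^{p^{k+1}}=F(x^{p^{k+1}})$, where $F(y):=\prod_{a\in\mathbb{F}_q^{\times}}(1+a^{p^{k+1}}y)^{\mathcal{O}^{[k]/p^{k+1}}(a)}$; define $F'(y)$ analogously from $\mathcal{P}$. Since $a\mapsto a^{p^{k+1}}$ permutes $\mathbb{F}_q^{\times}$, relabelling the product gives $F(y)=G_{\tilde{\mathcal{O}}}(y)$ and $F'(y)=G_{\tilde{\mathcal{P}}}(y)$ for some $\tilde{\mathcal{O}},\tilde{\mathcal{P}}\in\Pi_{q,n}$, and \eqref{eq:G-factorization} yields $G_{\mathcal{O}}(x)=H(x)F(x^{p^{k+1}})$ together with $G_{\mathcal{P}}(x)=H(x)F'(x^{p^{k+1}})$.

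Next I write $H(x)=\sum_l h_l x^l$ and set $H^*(y):=\sum_i h_{ip^{k+1}}y^i$. Because $F(x^{p^{k+1}})-F'(x^{p^{k+1}})$ is supported in degrees divisible by $p^{k+1}$, extracting the coefficient of $x^{jp^{k+1}}$ from $G_{\mathcal{O}}(x)-G_{\mathcal{P}}(x)=H(x)\bigl(F(x^{p^{k+1}})-F'(x^{p^{k+1}})\bigr)$ produces $s_{jp^{k+1}}(\mathcal{O})-s_{jp^{k+1}}(\mathcal{P})=[y^j]\bigl(H^*(y)(F(y)-F'(y))\bigr)$. The hypothesis forces the left-hand side to vanish for $j=1,\dots,q-1$, and the $j=0$ case is trivial since $F(0)=F'(0)=1$. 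Therefore $H^*(y)(F(y)-F'(y))\equiv 0\pmod{y^q}$, and since $H^*(0)=1$ the factor $H^*$ is a unit modulo $y^q$, so cancelling it yields $F(y)\equiv F'(y)\pmod{y^q}$.

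Finally, $F\equiv F'\pmod{y^q}$ is precisely the statement $s_j(\tilde{\mathcal{O}})=s_j(\tilde{\mathcal{P}})$ for $j=1,\dots,q-1$, so Lemma~\ref{lemma:0} applied to $\tilde{\mathcal{O}},\tilde{\mathcal{P}}$ gives $\tilde{\mathcal{O}}(b)\equiv\tilde{\mathcal{P}}(b)\pmod p$ for every $b\in\mathbb{F}_q^{\times}$, equivalently $\mathcal{O}^{[k]/p^{k+1}}(a)\equiv\mathcal{P}^{[k]/p^{k+1}}(a)\pmod p$ for every $a$. This residue equals the base-$p$ digit $\mathcal{O}(a)_{k+1}$ of $\mathcal{O}(a)$, and analogously for $\mathcal{P}$, so $\mathcal{O}(a)_{k+1}=\mathcal{P}(a)_{k+1}$, which together with $\mathcal{O}_{\{k\}}=\mathcal{P}_{\{k\}}$ delivers $\mathcal{O}_{\{k+1\}}=\mathcal{P}_{\{k+1\}}$. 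The step I expect to be the main obstacle is the coefficient-extraction identity above: one has to recognise that the degree-$jp^{k+1}$ slice of $H(x)\bigl(F(x^{p^{k+1}})-F'(x^{p^{k+1}})\bigr)$ is exactly the degree-$j$ coefficient of the product $H^*(y)(F(y)-F'(y))$, and only this observation allows one to cancel the unit $H^*(y)$ modulo $y^q$ and feed $F,F'$ into Lemma~\ref{lemma:0}.
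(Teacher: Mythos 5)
Your proof is correct and follows essentially the same route as the paper: both factor $G_{\mathcal{O}}=G_{\mathcal{O}_{\{k\}}}\cdot G_{\mathcal{O}^{[k]}}$, observe that the second factor is supported in degrees divisible by $p^{k+1}$, cancel the common unit factor to conclude that the coefficients of $G_{\mathcal{O}^{[k]}}$ and $G_{\mathcal{P}^{[k]}}$ agree up to degree $(q-1)p^{k+1}$, and then feed the resulting data into Lemma~\ref{lemma:0} to read off the digit $\mathcal{O}(a)_{k+1}$. The only (cosmetic) differences are that you invert the unit power series $H^*(y)$ modulo $y^q$ where the paper solves the equivalent unitriangular linear system \eqref{eq:system}, and that you absorb the Frobenius twist by permuting the roots via $a\mapsto a^{p^{k+1}}$ where the paper takes $p^{k+1}$-th roots of the coefficients.
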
 

\begin{proof} 
Compare the coefficient of $x^{jp^{k+1}}$ on the two sides of \eqref{eq:G-factorization}.  Taking into account that all non-zero terms of $G_{\mathcal{O}^{[k]}}(x)$ have degree divisible by $p^{k+1}$ 
we get that 
\begin{equation}\label{eq:recursion} 
s_{jp^{k+1}}(\mathcal{O})=\sum_{i=0}^js_{ip^{k+1}}(\mathcal{O}^{[k]})s_{(j-i)p^{k+1}}(\mathcal{O}_{\{k\}}) \quad \text{ for } \quad j=1,\dots,q-1.
\end{equation}  
Consider the following system of linear equations for the unknowns $y_1,\dots,y_{q-1}$: 
\begin{equation}\label{eq:system} 
\sum_{i=1}^js_{(j-i)p^{k+1}}(\mathcal{O}_{\{k\}})y_i=s_{jp^{k+1}}(\mathcal{O})-
s_{jp^{k+1}}(\mathcal{O}_{\{k\}}), 
\quad j=1,\dots,q-1. 
\end{equation} 
By \eqref{eq:recursion} we see that $y_j=s_{jp^{k+1}}(\mathcal{O}^{[k]})$, $j=1,\dots,q-1$ is a solution of the system \eqref{eq:system}. Moreover, the system \eqref{eq:system} has a unique solution: the equation for $j=1$ gives 
\[y_1=s_{p^{k+1}}(\mathcal{O})-
s_{p^{k+1}}(\mathcal{O}_{\{k\}}),\] 
and  supposing that we have already fixed the values of $y_1,\dots,y_{j-1}$ for some $j>1$, we have 
\begin{equation*}
y_j=s_{jp^{k+1}}(\mathcal{O})-s_{jp^{k+1}}(\mathcal{O}_{\{k\}})
-\sum_{i=1}^{j-1}s_{(j-i)p^{k+1}}(\mathcal{O}_{\{k\}}) y_i. 
\end{equation*} 
Similar considerations hold for the values of  $s_{jp^{k+1}}(\mathcal{P}^{[k]})$, $j=1,\dots,q-1$. 
By assumption we have 
\[s_{ip^{k+1}}(\mathcal{P}_{\{k\}})=s_{ip^{k+1}}(\mathcal{O}_{\{k\}})
\text{ and }
s_{ip^{k+1}}(\mathcal{P})=s_{ip^{k+1}}(\mathcal{O}) 
\text{ for }i=1,\dots,q-1.\] 
It follows that $y_j=s_{jp^{k+1}}(\mathcal{P}^{[k]})$, $j=1,\dots,q-1$ is also a solution of the system \eqref{eq:system}, and by uniqueness of the solution we conclude that 
\begin{equation}\label{eq:1}
s_{jp^{k+1}}(\mathcal{O}^{[k]})=s_{jp^{k+1}}(\mathcal{P}^{[k]}) 
\text{ for }j=1,\dots,q-1.\end{equation} 
By \eqref{eq:G-p-power} we have 
\begin{equation}\label{eq:2}
s_{jp^{k+1}}(\mathcal{O}^{[k]})=s_j(\mathcal{O}^{[k]/p^{k+1}})^{p^{k+1}}
\end{equation}
and 
\begin{equation}\label{eq:3}
s_{jp^{k+1}}(\mathcal{P}^{[k]})=s_j(\mathcal{P}^{[k]/p^{k+1}})^{p^{k+1}}
\end{equation} 
By \eqref{eq:1}, \eqref{eq:2}, \eqref{eq:3} we get that 
\begin{equation}\label{eq:7} s_j(\mathcal{O}^{[k]/p^{k+1}})=s_j(\mathcal{P}^{[k]/p^{k+1}}) 
\text{ for }j=1,\dots,q-1.
\end{equation} 
We conclude from \eqref{eq:7} by Lemma~\ref{lemma:0} that 
\begin{equation}\label{eq:4}
\mathcal{O}^{[k]/p^{k+1}}_{\{0\}}=\mathcal{P}^{[k]/p^{k+1}}_{\{0\}}.
\end{equation}  
Note finally that 
\begin{equation}\label{eq:5}\mathcal{O}^{[k]/p^{k+1}}_{\{0\}}(a)=\mathcal{O}^{[k]}(a)_{k+1}=\mathcal{O}(a)_{k+1} \text{ for all }a\in \mathbb{F}_q^{\times}\end{equation}
and similarly 
\begin{equation}\label{eq:6}
\mathcal{P}^{[k]/p^{k+1}}_{\{0\}}(a)=\mathcal{P}^{[k]}(a)_{k+1}=\mathcal{P}(a)_{k+1} 
\text{ for all }a\in \mathbb{F}_q^{\times}.
\end{equation} 
Now \eqref{eq:4}, \eqref{eq:5}, \eqref{eq:6} show that 
$\mathcal{O}(a)_{k+1}=\mathcal{P}(a)_{k+1}\text{ for all }a\in \mathbb{F}_q^{\times}$. 
As we have $\mathcal{O}_{\{k\}}=\mathcal{P}_{\{k\}}$ by assumption, this gives the desired equality 
$\mathcal{O}_{\{k+1\}}=\mathcal{P}_{\{k+1\}}$.  
\end{proof} 

\begin{corollary} \label{cor:s_t} 
Let $\mathcal{O},\mathcal{P}\in \Pi_{q,n}$ and $t\in \{1,\dots,n\}$.  Assume that 
$s_{jp^k}(\mathcal{O})=s_{jp^k}(\mathcal{P})$ holds for all $j\in \{1,\dots,q-1\}$ and 
$k\in \{0,1,\dots,\lfloor \log_p(t)\rfloor\}$. 
Then we have 
\begin{itemize}
\item[(i)] $\mathcal{O}_{\{\lfloor \log_p(t)\rfloor\}}=\mathcal{P}_{\{\lfloor \log_p(t)\rfloor\}}$ 
\item[(ii)] $s_t(\mathcal{O})=s_t(\mathcal{P})$. 
\end{itemize} 
\end{corollary}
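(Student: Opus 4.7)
The plan is to prove (i) by a straightforward induction on $k$ from $0$ up to $K := \lfloor \log_p(t)\rfloor$, using Lemmas~\ref{lemma:0} and \ref{lemma:inductive step} as the base case and the inductive step respectively, and then to deduce (ii) by reading off the coefficient of $x^t$ in the factorization \eqref{eq:G-factorization} at level $K$.

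For part (i), I take $K := \lfloor \log_p(t)\rfloor$ and argue by induction on $k\in\{0,1,\dots,K\}$ that $\mathcal{O}_{\{k\}}=\mathcal{P}_{\{k\}}$. The base case $k=0$ is immediate from Lemma~\ref{lemma:0}: the hypothesis with $k=0$ says $s_j(\mathcal{O})=s_j(\mathcal{P})$ for $j=1,\ldots,q-1$, and Lemma~\ref{lemma:0} gives $\mathcal{O}_{\{0\}}=\mathcal{P}_{\{0\}}$. For the inductive step, suppose $0\le k < K$ and $\mathcal{O}_{\{k\}}=\mathcal{P}_{\{k\}}$. Since $k+1\le K\le \lfloor \log_p n\rfloor$, the hypothesis provides $s_{jp^{k+1}}(\mathcal{O})=s_{jp^{k+1}}(\mathcal{P})$ for all $j\in\{1,\dots,q-1\}$, so Lemma~\ref{lemma:inductive step} applies and yields $\mathcal{O}_{\{k+1\}}=\mathcal{P}_{\{k+1\}}$, closing the induction.

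For part (ii), I use the factorization \eqref{eq:G-factorization} at level $K$, namely
\[G_{\mathcal{O}}(x)=G_{\mathcal{O}_{\{K\}}}(x)\cdot G_{\mathcal{O}^{[K]}}(x).\]
By \eqref{eq:G-p-power} every nonzero term of $G_{\mathcal{O}^{[K]}}(x)$ has degree divisible by $p^{K+1}$, and its constant term equals $1$. By definition of $K$ we have $t < p^{K+1}$, so the only contribution to the coefficient of $x^t$ in the product comes from pairing $x^t$ in $G_{\mathcal{O}_{\{K\}}}(x)$ with the constant term of $G_{\mathcal{O}^{[K]}}(x)$. Hence $s_t(\mathcal{O})=s_t(\mathcal{O}_{\{K\}})$, and exactly the same reasoning gives $s_t(\mathcal{P})=s_t(\mathcal{P}_{\{K\}})$. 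Combining these with $\mathcal{O}_{\{K\}}=\mathcal{P}_{\{K\}}$ from (i) yields $s_t(\mathcal{O})=s_t(\mathcal{P})$.

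There is no genuine obstacle here: the corollary essentially packages the two preceding lemmas into an iterated form. The only small care needed is the index bookkeeping, namely verifying that $k+1$ stays within the range required by Lemma~\ref{lemma:inductive step}, which follows from $t\le n$, and noticing that the strict inequality $t<p^{K+1}$ (rather than merely $t\le p^{K+1}$) is exactly what allows one to drop the factor $G_{\mathcal{O}^{[K]}}(x)$ when extracting the coefficient of $x^t$.
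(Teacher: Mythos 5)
Your proof is correct and follows essentially the same route as the paper: part (i) by Lemma~\ref{lemma:0} plus iterated application of Lemma~\ref{lemma:inductive step}, and part (ii) by extracting the coefficient of $x^t$ from the factorization \eqref{eq:G-factorization} at level $\lfloor\log_p(t)\rfloor$, using that all positive-degree terms of $G_{\mathcal{O}^{[K]}}$ have degree at least $p^{K+1}>t$. The index bookkeeping you carry out is exactly what the paper leaves implicit.
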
 

\begin{proof} 
Lemma~\ref{lemma:0} and an iterated use of Lemma~\ref{lemma:inductive step} yield (i), so  
setting $d:=\lfloor \log_p(t)\rfloor$ we have  
$\mathcal{O}_{\{d\}}=\mathcal{P}_{\{d\}}$.
 By \eqref{eq:G-p-power} we have 
$G_{\mathcal{O}^{[d]}}(x)=G_{\mathcal{O}^{[d]/p^{d+1}}}(x)^{p^{d+1}}$, showing that all 
non-zero terms of  $G_{\mathcal{O}^{[d]}}(x)$ of positive degree have degree greater than or equal to $p^{d+1}>t$; moreover, the constant term of $G_{\mathcal{O}^{[d]}}(x)$ is $1$. On the other hand,  
by \eqref{eq:G-factorization} we have 
$G_{\mathcal{O}}(x)=G_{\mathcal{O}_{\{d\}}}(x)\cdot G_{\mathcal{O}^{[d]}}(x)$. 
It follows that $s_t(\mathcal{O})=s_t(\mathcal{O}_{\{d\}})$. 
Similarly we have $s_t(\mathcal{P})=s_t(\mathcal{P}_{\{d\}})$.  
Taking into account (i) 
we get  
\[s_t(\mathcal{O})=s_t(\mathcal{O}_{\{d\}})=s_t(\mathcal{P}_{\{d\}})=s_t(\mathcal{P}),\] 
so (ii) holds as well.   
\end{proof}

\begin{proofof}{Theorem~\ref{thm:main}}
Suppose that for the $S_n$-orbit corresponding to $\mathcal{O}\in \Pi_{q,n}$ 
and the $S_n$-orbit corresponding to $\mathcal{P}\in \Pi_{q,n}$ we have 
$s_{jp^k}(\mathcal{O})=s_{jp^k}(\mathcal{P})$ for all $j=1,\dots,q-1$ and $k\in \mathbb{Z}_{\ge 0}$ (recall that $s_m(\mathcal{O})=0=s_m(\mathcal{P})$ for any $m>n$). 
Corollary~\ref{cor:s_t} (i) in the special case $n=t$ gives   
\[\mathcal{O}_{\{\lfloor \log_p n\rfloor\}}
=\mathcal{P}_{\{\lfloor \log_p n\rfloor\}}.\] 
Taking into account that $\mathcal{O}=\mathcal{O}_{\{\lfloor \log_p n\rfloor\}}$ and 
$\mathcal{P}=\mathcal{P}_{\{\lfloor \log_p n\rfloor\}}$ we conclude the equality 
$\mathcal{O}=\mathcal{P}$. 
This clearly means that the set of elementary symmetric polynomials in the statement is separating. 
\end{proofof} 


\section{Minimality} 

\begin{lemma}\label{lemma:monotonity} 
Suppose that for some subset $A\subseteq \{1,\dots,n\}$ we have that 
$\{s_i^{(n)}\mid i\in A\}$ is separating in   $F[x_1,\dots,x_n]^{S_n}$. 
Then for all $m\le n$ we have that 
$\{s_j^{(m)}\mid j\in A\cap\{1,\dots,m\}\}$ is separating in 
$F[x_1,\dots,x_m]^{S_m}$. 
\end{lemma}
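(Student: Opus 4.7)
The plan is to deduce separation in dimension $m$ from separation in dimension $n$ by padding vectors with zeros. Given $v=(v_1,\dots,v_m)\in F^m$, I would embed it into $F^n$ as
\[\tilde v:=(v_1,\dots,v_m,0,\dots,0)\in F^n,\]
and similarly define $\tilde w$ for a second vector $w\in F^m$.

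The first key observation is the compatibility of elementary symmetric polynomials under this padding: for $i\le m$ we have $s_i^{(n)}(\tilde v)=s_i^{(m)}(v)$, and for $i>m$ we have $s_i^{(n)}(\tilde v)=0$, because any product of $i>m$ distinct coordinates of $\tilde v$ must include at least one zero. The same relations hold for $\tilde w$.

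The second observation is that if $v$ and $w$ lie in different $S_m$-orbits, then $\tilde v$ and $\tilde w$ lie in different $S_n$-orbits: two vectors in $F^n$ are $S_n$-conjugate iff they have the same multiset of coordinates, and appending the same number of zeros to the multisets of $v$ and $w$ preserves (in)equality of multisets.

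With these two facts the proof is immediate. I would take $v,w\in F^m$ with distinct $S_m$-orbits, pass to $\tilde v,\tilde w\in F^n$ which have distinct $S_n$-orbits, and apply the hypothesis to get some $i\in A$ with $s_i^{(n)}(\tilde v)\neq s_i^{(n)}(\tilde w)$. Since for $i>m$ both values are $0$, we must have $i\le m$, so $i\in A\cap\{1,\dots,m\}$; then by the first observation $s_i^{(m)}(v)\neq s_i^{(m)}(w)$, as required. There is no real obstacle here — the lemma is essentially a bookkeeping statement about how elementary symmetric polynomials behave under appending zero coordinates.
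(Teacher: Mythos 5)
Your proof is correct and is essentially identical to the paper's: both embed $F^m$ into $F^n$ by appending zeros, use that $s_i^{(n)}$ restricts to $s_i^{(m)}$ for $i\le m$ and vanishes for $i>m$, and conclude that any separating index must satisfy $i\le m$. No issues.
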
 

\begin{proof}
For $n\ge m$ we shall treat $F^m$ as the subspace of $F^n$ consisting of the vectors whose last $n-m$ coordinates are zero. With this convention we have that for $v\in F^m$ and $n>m$, 
\begin{equation}\label{eq:m<n}
s_j^{(n)}(v)=\begin{cases} s_j^{(m)}(v) &\text{ for }j\le m 
\\ 0 &\text{ for }m<j\le n.\end{cases}
\end{equation} 
Now take $v,w\in F^m$ having different $S_m$-orbit. Then viewed as elements of $F^n$, 
$v$ and $w$ have different $S_n$-orbit. 
Hence by assumption there exists an $i\in A$ with $s^{(n)}_i(v)\neq s^{(n)}_i(w)$. 
In particular, $s^{(n)}_i(v)$ and $s^{(n)}_i(w)$ are not both zero, hence $i\le m$. 
Moreover, by \eqref{eq:m<n} we conclude $s^{(m)}_i(v)\neq s^{(m)}_i(w)$. 
\end{proof} 

\begin{lemma} \label{lemma:subfield} 
Let $K$ be a field containing $F$ as a subfield. 
If $\{s_i^{(n)}\mid i\in A\}$ is a separating set in  $K[x_1,\dots,x_n]^{S_n}$, 
then it is also a separating set in $F[x_1,\dots,x_n]^{S_n}$. 
\end{lemma}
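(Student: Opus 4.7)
The plan is to reduce separation over $F$ to separation over $K$ via the inclusion $F^n \subseteq K^n$. The elementary symmetric polynomials have coefficients in the prime subring (in particular, they may be viewed simultaneously as elements of $F[x_1,\dots,x_n]$ and of $K[x_1,\dots,x_n]$), so for any $v\in F^n$ the value $s_i^{(n)}(v)$ is the same whether the polynomial is interpreted over $F$ or over $K$. This is the only compatibility we need.

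First I would take two vectors $v,w\in F^n$ lying in different $S_n$-orbits of $F^n$, and argue that they also lie in different $S_n$-orbits of $K^n$. This is essentially tautological: an element $\pi\in S_n$ acts by permuting coordinates, so the $S_n$-orbit of $v$ in $K^n$ consists exactly of the permutations of $v$, all of which have entries in $F$; hence the $S_n$-orbit of $v$ in $K^n$ coincides with the $S_n$-orbit of $v$ in $F^n$, and similarly for $w$. In particular, if no $\pi\in S_n$ sends $v$ to $w$ inside $F^n$, then no such $\pi$ exists inside $K^n$ either.

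Next I would apply the hypothesis that $\{s_i^{(n)}\mid i\in A\}$ is separating in $K[x_1,\dots,x_n]^{S_n}$ to produce some $i\in A$ with $s_i^{(n)}(v)\neq s_i^{(n)}(w)$ in $K$. By the coefficient remark above, this inequality already holds in $F$, so the same $i\in A$ witnesses the separation of the $S_n$-orbits of $v,w$ in $F[x_1,\dots,x_n]^{S_n}$.

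There is no real obstacle here; the lemma is a formal consequence of the fact that the $S_n$-action and the polynomials $s_i^{(n)}$ are defined over the prime field, so everything restricts cleanly from $K$ to $F$. The only thing worth spelling out in the written proof is the identification of orbits in $F^n$ with their images in $K^n$, after which the conclusion is immediate.
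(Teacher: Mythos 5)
Your proof is correct and follows essentially the same route as the paper: the elementary symmetric polynomials are defined over the prime subring, and $F^n$ is an $S_n$-stable subset of $K^n$ in which orbits coincide with the orbits taken inside $F^n$, so separation over $K$ restricts to separation over $F$. The paper states this in two sentences; you have merely spelled out the (tautological) orbit comparison in more detail.
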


\begin{proof} 
The elementary symmetric polynomials are defined over the prime subfield $\mathbb{F}_p$ of $K$ and $F$. If a set of elementary symmetric polynomials separates the $S_n$-orbits in $K^n$, then it necessarily separates the orbits in the $S_n$-stable subset $F^n$ of $K^n$. 
\end{proof} 

\begin{definition}\label{def:irreplaceable} 
We say that the elementary symmetric polynomial $s^{(n)}_k$ 
\emph{is irreplaceable over $F$} if any separating subset of $F[x_1,\dots,x_n]^{S_n}$ 
that consists of elementary symmetric polynomials necessarily contains $s^{(n)}_k$. 
\end{definition} 

\begin{remark} (i) We would like to emphasize that the question wether the $k$th elementary symmetric polynomial is irreplaceable depends both on the number of variables and  on the base field considered. 

(ii) It follows from Theorem~\ref{thm:main} that if $s^{(n)}_m$ is irreplaceable over $\mathbb{F}_q$, then $m\in [n]_q$. 
\end{remark} 

Lemma~\ref{lemma:monotonity} and Lemma~\ref{lemma:subfield} have the following immediate consequence: 

\begin{corollary} \label{cor:overfield} 
If $s^{(n)}_k$ is irreplaceable over $F$, then $s^{(m)}_k$ is irreplaceable over $K$  for all $m\ge n$ and all overfields $K$ of $F$. 
\end{corollary}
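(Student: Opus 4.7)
The plan is to chain the two lemmas in a contrapositive-flavored argument. Fix $m \ge n$ and an overfield $K$ of $F$, and assume $s^{(n)}_k$ is irreplaceable over $F$. To show $s^{(m)}_k$ is irreplaceable over $K$, I take an arbitrary separating subset $T \subseteq K[x_1,\dots,x_m]^{S_m}$ consisting of elementary symmetric polynomials, say $T = \{s^{(m)}_i \mid i \in A\}$ for some $A \subseteq \{1,\dots,m\}$, and show $k \in A$.

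First I would apply Lemma~\ref{lemma:subfield} with the roles of the base and overfield played by $F$ and $K$: since $T$ separates $S_m$-orbits in $K^m$, it also separates $S_m$-orbits in $F^m$, so $T$ is a separating set in $F[x_1,\dots,x_m]^{S_m}$. Next I would apply Lemma~\ref{lemma:monotonity} over $F$ to pass from $m$ variables down to $n$ variables; this yields that $\{s^{(n)}_j \mid j \in A \cap \{1,\dots,n\}\}$ is a separating set in $F[x_1,\dots,x_n]^{S_n}$, and it still consists entirely of elementary symmetric polynomials.

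Because $s^{(n)}_k$ is irreplaceable over $F$ (and $k \le n$, as $s^{(n)}_k$ is a genuine invariant in the only interesting range), the preceding separating subset must contain $s^{(n)}_k$, forcing $k \in A \cap \{1,\dots,n\} \subseteq A$. Hence $s^{(m)}_k \in T$. Since $T$ was an arbitrary separating subset of $K[x_1,\dots,x_m]^{S_m}$ built from elementary symmetric polynomials, this establishes that $s^{(m)}_k$ is irreplaceable over $K$. There is no real obstacle here; the point is simply that Lemma~\ref{lemma:subfield} lets us shrink the scalars and Lemma~\ref{lemma:monotonity} lets us shrink the number of variables, and these two reductions compose to give exactly the required monotonicity of irreplaceability.
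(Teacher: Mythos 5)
Your argument is correct and is exactly the one the paper intends: the corollary is stated as an immediate consequence of Lemma~\ref{lemma:subfield} (shrink the field) and Lemma~\ref{lemma:monotonity} (shrink the number of variables), composed in the same order you use. No issues.
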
 

Irreplaceable elementary symmetric polynomials have the following obvious  characterization: 

\begin{lemma} \label{lemma:characterization}
The elementary symmetric polynomial $s^{(n)}_k$  is irreplaceable over $F$ if and only if there exist elements $v,w\in F^n$ such that 
\begin{equation} \label{eq:v,w} 
s_j^{(n)}(v)=s_j^{(n)}(w) \quad \forall j\in \{1,\dots,n\}\setminus \{k\} \text{ and }s_k^{(n)}(v)\neq s_k^{(n)}(w). 
\end{equation} 
\end{lemma}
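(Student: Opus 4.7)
The plan is to verify both implications directly, using the well-known fact (recalled in the introduction) that the full set $\{s^{(n)}_1,\dots,s^{(n)}_n\}$ is separating in $F[x_1,\dots,x_n]^{S_n}$ over any field $F$. Equivalently, the $S_n$-orbit of a vector $v\in F^n$ is determined by the values $s^{(n)}_1(v),\dots,s^{(n)}_n(v)$, because these values are (up to sign) the coefficients of the monic polynomial $\prod_{i=1}^n(x-v_i)\in F[x]$, whose multiset of roots is exactly the orbit of $v$.

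For the ``if'' direction, I would assume that $v,w\in F^n$ satisfy \eqref{eq:v,w} and argue by contradiction: suppose some separating set $T\subseteq F[x_1,\dots,x_n]^{S_n}$ of elementary symmetric polynomials does not contain $s^{(n)}_k$. Then $T\subseteq\{s^{(n)}_j\mid j\in\{1,\dots,n\}\setminus\{k\}\}$, and by hypothesis every element of this larger set takes the same value on $v$ and $w$, hence so does every element of $T$. But the inequality $s^{(n)}_k(v)\neq s^{(n)}_k(w)$ combined with the fact that $\{s^{(n)}_1,\dots,s^{(n)}_n\}$ is separating shows that $v$ and $w$ lie in distinct $S_n$-orbits, contradicting that $T$ separates them.

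For the ``only if'' direction, I would argue the contrapositive. Assume no pair $v,w\in F^n$ satisfies \eqref{eq:v,w}. Then for every pair $v,w$ with $s^{(n)}_j(v)=s^{(n)}_j(w)$ for all $j\in\{1,\dots,n\}\setminus\{k\}$, we must also have $s^{(n)}_k(v)=s^{(n)}_k(w)$; that is, agreement on all $s^{(n)}_j$ with $j\neq k$ forces agreement on the full list $s^{(n)}_1,\dots,s^{(n)}_n$, which in turn forces $v$ and $w$ to lie in the same $S_n$-orbit. This means that $\{s^{(n)}_j\mid j\in\{1,\dots,n\}\setminus\{k\}\}$ is itself a separating subset of elementary symmetric polynomials that avoids $s^{(n)}_k$, so $s^{(n)}_k$ is not irreplaceable.

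There is no real obstacle here: once one invokes the classical fact that the full family of elementary symmetric polynomials is separating, the statement reduces to elementary logic. The only minor point to be careful about is to phrase ``irreplaceable'' correctly against Definition~\ref{def:irreplaceable}, namely in terms of separating subsets consisting \emph{entirely} of elementary symmetric polynomials, which is exactly the setting in which the above contrapositive argument produces the required separating subset.
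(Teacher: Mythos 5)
Your proof is correct: both directions follow, as you say, from the classical fact that the full family $\{s^{(n)}_1,\dots,s^{(n)}_n\}$ is separating (the coefficients of $\prod_i(x-v_i)$ determine the multiset of coordinates), combined with the observation that the complement $\{s^{(n)}_j\mid j\neq k\}$ is separating precisely when no pair satisfying \eqref{eq:v,w} exists. The paper states this lemma as an ``obvious characterization'' and omits the proof entirely, and your argument is exactly the intended one.
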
 

\begin{lemma} \label{lemma:irreplaceable for all}
If $s_k^{(n)}$ is irreplaceable over $\mathbb{F}_q$, then 
$s^{(m)}_{p^jk}$ is irreplaceable over $\mathbb{F}_q$  for all $j\in\mathbb{Z}_{\ge 0}$ and $m\ge p^jn$. 
\end{lemma}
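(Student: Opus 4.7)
The plan is to construct, from a witness pair for the irreplaceability of $s^{(n)}_k$ over $\mathbb{F}_q$, a witness pair for the irreplaceability of $s^{(p^j n)}_{p^j k}$ over $\mathbb{F}_q$; Corollary~\ref{cor:overfield} then immediately promotes this to all $m \ge p^j n$. So the whole statement reduces to the special case $m = p^j n$.

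Start from $v, w \in \mathbb{F}_q^n$ supplied by Lemma~\ref{lemma:characterization}, so that $s^{(n)}_l(v) = s^{(n)}_l(w)$ for $l \in \{1, \dots, n\} \setminus \{k\}$ while $s^{(n)}_k(v) \ne s^{(n)}_k(w)$. Form $v', w' \in \mathbb{F}_q^{p^j n}$ by repeating each coordinate of $v$ (respectively $w$) exactly $p^j$ times. If $\mathcal{O}$ is the orbit-datum of $v$ and $\mathcal{O}'$ is that of $v'$, the generating polynomial in \eqref{eq:G_O} satisfies $G_{\mathcal{O}'}(x) = G_{\mathcal{O}}(x)^{p^j}$, and the Frobenius identity in characteristic $p$ yields
\[G_{\mathcal{O}'}(x) = \sum_{l=0}^{n} s^{(n)}_l(v)^{p^j}\, x^{p^j l},\]
with an analogous formula for $w'$.

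Reading off coefficients, I would verify that $s^{(p^j n)}_l(v') = s^{(p^j n)}_l(w')$ for all $l \in \{1, \dots, p^j n\} \setminus \{p^j k\}$: this holds trivially (both sides are $0$) when $p^j \nmid l$, and when $l = p^j l'$ with $l' \in \{1, \dots, n\} \setminus \{k\}$ it reduces to the corresponding equality for $v$ and $w$ after raising to the $p^j$-th power. At the distinguished index $l = p^j k$ we have $s^{(p^j n)}_{p^j k}(v') - s^{(p^j n)}_{p^j k}(w') = s^{(n)}_k(v)^{p^j} - s^{(n)}_k(w)^{p^j}$, which is nonzero because the Frobenius endomorphism is injective on $\mathbb{F}_q$. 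Invoking Lemma~\ref{lemma:characterization} in the reverse direction finishes the case $m = p^j n$.

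The substantive input is the Frobenius computation that concentrates $G_{\mathcal{O}'}$ on the degrees divisible by $p^j$; everything else is bookkeeping. The only point that requires a moment's attention is the verification that \emph{every} coefficient at an index $l \ne p^j k$ matches between $v'$ and $w'$, and it is precisely the vanishing of the non-$p^j$-multiple coefficients under Frobenius that makes this automatic for those indices not already controlled by the hypothesis on $v, w$.
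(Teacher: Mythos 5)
Your proposal is correct and follows essentially the same route as the paper: both pass to the orbit data, use the Frobenius identity $G_{p^j\mathcal{O}}(x)=G_{\mathcal{O}}(x)^{p^j}=\sum_i s_i(\mathcal{O})^{p^j}x^{ip^j}$ to concentrate the coefficients on degrees divisible by $p^j$, and conclude via Lemma~\ref{lemma:characterization} and injectivity of Frobenius. The only cosmetic difference is that you first reduce to $m=p^jn$ via Corollary~\ref{cor:overfield}, whereas the paper absorbs general $m\ge p^jn$ directly by viewing $p^j\mathcal{O}$ as an element of $\Pi_{q,m}$.
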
 
\begin{proof}
Suppose that $s^{(n)}_k$  is irreplaceable over $\mathbb{F}_q$. Then by Lemma~\ref{lemma:characterization} 
there exist $v,w\in \mathbb{F}_q^n$ such that  $s^{(n)}_k(v)\neq s^{(n)}_k(w)$ 
and $s^{(n)}_i(v)= s^{(n)}_i(w)$ for all $i\in \{1,\dots,n\}\setminus \{k\}$. 
Denote by $\mathcal{O},\mathcal{P}\in \Pi_{q,n}$ the functions $\mathbb{F}_q\to \mathbb{Z}_{\ge 0}$ corresponding to the orbits of $v,w$, 
so we have $s_i(\mathcal{O})=s_i(\mathcal{P})$ for $i\in \{1,\dots,n\}\setminus \{k\}$ and 
$s_k(\mathcal{O})\neq s_k(\mathcal{P})$. 
Consider the polynomials 
$G_{\mathcal{O}}(x)$ and $G_{\mathcal{P}}(x)$. 
Then these are polynomials of degree at most $n$, all but their degree $k$ coefficients agree and their degree $k$ coefficient is different.  
For $m\ge p^jn$ denote by $p^j\mathcal{O}\in \Pi_{q,m}$,  $p^j\mathcal{P}\in \Pi_{q,m}$ 
the functions $a\mapsto p^j\mathcal{O}(a)$ ($a\in \mathbb{F}_q^{\times}$), 
$a\mapsto   p^j\mathcal{P}(a)$ ($a\in \mathbb{F}_q^{\times}$). 
We have the equalities 
\[G_{p^j\mathcal{O}}(x)=G_{\mathcal{O}}(x)^{p^j}=\sum_{i=0}^n
s_i(\mathcal{O})^{p^j}x^{ip^j}
\text{ and } 
G_{p^j\mathcal{P}}(x)=G_{\mathcal{P}}(x)^{p^j}=\sum_{i=0}^n
s_i(\mathcal{P})^{p^j}x^{ip^j}.\] 
This shows that $s_{p^jk}(p^j\mathcal{O})\neq s_{p^jk}(p^j\mathcal{P})$ and 
$s_i(p^j\mathcal{O})= s_i(p^j\mathcal{P})$ for all $i\in \{1,\dots,m\}\setminus \{k\}$. 
Consequently, $s^{(m)}_{p^jk}$ is irreplaceable over $\mathbb{F}_q$ by Lemma~\ref{lemma:characterization}. 
\end{proof}

\begin{corollary} \label{cor:minimality} 
Suppose that the elementary symmetric polinomial $s^{(k)}_k$ is irreplaceable over 
$\mathbb{F}_q$ for each $k\in \{1,2,\dots,q-1\}$. 
Then for an arbitrary $n$ the separating subset 
$\{s^{(n)}_{m}\mid  m\in [n]_q\}$ 
of $\mathbb{F}_q[x_1,\dots,x_n]^{S_n}$ given in Theorem~\ref{thm:main} is minimal (with respect to inclusion).
\end{corollary}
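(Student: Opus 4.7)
The plan is to show that every element $s^{(n)}_m$ of the given separating set (with $m\in [n]_q$) is irreplaceable over $\mathbb{F}_q$ in the sense of Definition~\ref{def:irreplaceable}; then by definition no element can be dropped, so the set is minimal with respect to inclusion.

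Fix $m\in [n]_q$. By definition of $[n]_q$ we can write $m=jp^k$ with $j\in\{1,\dots,q-1\}$, $k\in\mathbb{Z}_{\ge 0}$, and $jp^k\le n$. The hypothesis of the corollary is exactly that $s^{(j)}_j$ is irreplaceable over $\mathbb{F}_q$ for each such $j$. I would then apply Lemma~\ref{lemma:irreplaceable for all} directly: taking the lemma's ``$k$'' to be our $j$, the lemma's ``$n$'' to be our $j$, the lemma's ``$j$'' to be our $k$, and the lemma's ``$m$'' to be our $n$, the condition $n\ge p^kj$ is satisfied since $m=p^kj\le n$, and the conclusion is that $s^{(n)}_{p^kj}=s^{(n)}_m$ is irreplaceable over $\mathbb{F}_q$.

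Since this holds for every $m\in [n]_q$, and by definition an irreplaceable polynomial must appear in every separating subset consisting of elementary symmetric polynomials, any proper subset of $\{s^{(n)}_m\mid m\in [n]_q\}$ fails to separate. Combined with Theorem~\ref{thm:main}, which guarantees that the full set is separating, this yields minimality with respect to inclusion.

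There is essentially no obstacle here: the whole point of Lemma~\ref{lemma:irreplaceable for all} was precisely to transport irreplaceability from the small ``diagonal'' cases $s^{(j)}_j$ with $j<q$ up to the full list $\{s^{(n)}_{jp^k}\}$, and the indexing in the definition of $[n]_q$ matches exactly the output of that lemma. The corollary is therefore a direct bookkeeping consequence once the hypothesis on the diagonal cases is in place, and the real work (verifying the hypothesis for specific small primes) is done elsewhere in the paper.
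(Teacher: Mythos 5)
Your argument is correct and is exactly the intended one: the paper states this corollary without proof as an immediate consequence of Lemma~\ref{lemma:irreplaceable for all}, and your application of that lemma (with its $n$ set to $j$ and its $m$ set to our $n$, the hypothesis $n\ge p^kj=m$ being satisfied by the definition of $[n]_q$) shows every $s^{(n)}_m$ with $m\in[n]_q$ is irreplaceable, so no proper subset can be separating while the full set is separating by Theorem~\ref{thm:main}. Nothing is missing.
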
  

\subsection{Some irreplaceable elementary symmetric polynomials}\label{subsec:some irreplaceables}   

Below for certain prime powers $q$ and certain integers $n,k$  we 
provide pairs of vectors $v,w$ in $\mathbb{F}_q^n$ satisfying \eqref{eq:v,w},  
showing by Lemma~\ref{lemma:characterization} that 
$s_k^{(n)}$  is irreplaceable over $\mathbb{F}_q$. 
The elements of the $p$-element field $\mathbb{F}_p$ will be denoted by $0,1,2,\dots,p-1$ in the obvious way. 

\[\begin{array}{c||c|c} 
q=3 & s_1^{(1)} & s_2^{(2)} 
 \\ \hline \hline 
v & [1] & [1,2]   
\\ \hline
w & [0] & [0,0] 
\end{array}\] 

\[\begin{array}{c||c|c|c|c} 
q=5 & s_1^{(1)} & s_2^{(2)} & s_3^{(3)} & s_4^{(4)}  
 \\ \hline \hline 
v & [1] & [1,4] & [1,4,4] & [1,2,3,4]  
\\ \hline
w & [0] & [0,0] & [2,2,0] & [0,0,0,0]  
\end{array}\] 

\[\begin{array}{c||c|c|c|c|c|c} 
q=7 & s_1^{(1)} & s_2^{(2)} & s_3^{(3)} & s_4^{(4)} & s_6^{(6)} & s_5^{(6)} 
 \\ \hline \hline 
v & [1] & [1,6] & [1,2,4] & [1,1,6,6]  &  [1,2,3,4,5,6] & [2,2,2,3,6,6]
\\ \hline
w & [0] & [0,0] & [0,0,0] & [3,4,0,0]  &  [0,0,0,0,0,0] &  [1,1,4,5,5,5]
\end{array}\]  

\[\begin{array}{c||c|c|c|c|c|c} 
q=11 & s_1^{(1)} & s_2^{(2)} & s_3^{(3)} & s_4^{(4)} & s_5^{(5)} & s_6^{(6)} 
 \\ \hline \hline 
v & [1] & [5,6] & [5,7,9] & [1,2,2,5]  &  [1,3,9,5,4] & [1,1,3,6,8,8]
\\ \hline
w & [0] & [0,0] & [10,0,0] & [10,0,0,0]  &  [0,0,0,0,0,0] &  [9,9,10,10,0,0]
\end{array}\] 

\[\begin{array}{c||c} 
q=11 & s_7^{(8)} 
 \\ \hline \hline 
v & [1, 2, 2, 2, 2, 4, 4, 5] 
\\ \hline
w & [6, 7, 7, 9, 9, 9, 9, 10]
\end{array}\] 

\subsection{The polynomial $s_k^{(k)}$ for a divisor $k$ of $q-1$ for an arbitrary prime power $q$}  
\label{subsec:k|p-1} 
We may take as 
$v$ the vector whose coordinates are the $k$ roots in  $\mathbb{F}_q$ 
of the polynomial $x^k-1$, and for $w$ the zero vector. 
Then we have $0=s_1^{(k)}(w)=s_1^{(k)}(v)=s_2^{(k)}(w)=s_2^{(k)}(v)=
\cdots =s_{k-1}^{(k)}(w)=s_{k-1}^{(k)}(v)=s_k^{(k)}(w)$ 
whereas $s_k^{(k)}(v)=(-1)^{k+1}\in \mathbb{F}_q$.

\subsection{The polynomial $s_k^{(k)}$ for an arbitrary $k$ and $q\ge k!-k+1$} 
 The assumption on $q$ guarantees that the number $\binom{q-1+k}{k}$ of $S_k$-orbits in $\mathbb{F}_q^k$ is greater than the number $q^{k-1}$ which is an obvious upper bound for the number of possible values of the 
 $(k-1)$-tuples $(s_1^{(k)}(z),\dots,s_{k-1}^{(k)}(z))$ (where $z\in \mathbb{F}_q$), hence the desired pair of vectors in $\mathbb{F}_q^k$ exists by the pigeonhole principle. 
 From the special case $k=3$ we get the $s_3^{(3)}$ 
 is irreplaceable over $\mathbb{F}_q$ for all prime powers  
 $q\ge 4$. 

\subsection{Result obtained by  computer for $q=7$.}\label{subsec:p=7} 
$\{s_k^{(5)}\mid k=1,2,3,4\}$ is a separating set in 
$\mathbb{F}_7[x_1,x_2,x_3,x_4,x_5]^{S_5}$, hence 
the elementary symmetric polynomial 
$s_5^{(5)}$ is not irreplaceable over $\mathbb{F}_7$. 

\subsection{Further results obtained by  computer for $q=11$.}\label{subsec:p=11} 
The elementary symmetric polynomial  
$s_7^{(7)}$ is not irreplaceable over $\mathbb{F}_{11}$. On the other hand,  
$\{s_i^{(10)}\mid i=1,2,3,4,5,6,7, 10\}$ is a minimal separating set in $\mathbb{F}_{11}[x_1,\dots,x_{10}]^{S_{10}}$.  Consequently,  none of 
$s_8^{(8)}, s_8^{(9)}, s_8^{(10)},s_9^{(9)},s_9^{(10)}$ is irreplaceable over $\mathbb{F}_{11}$.  
Further computer calculations showed that  none of 
$s_8^{(11)}$, $s_8^{(12)}$, $s_8^{(13)}$  
is irreplaceable over $\mathbb{F}_{11}$.  
 
\bigskip 
The results of Section~\ref{subsec:some irreplaceables} (and Section~\ref{subsec:p=7})  imply by Lemma~\ref{lemma:irreplaceable for all} and Corollary~\ref{cor:minimality} 
the following: 

\begin{corollary}\label{cor:p=3,5,7} The separating subset 
$\{s_m^{(n)}\mid m\in [n]_q\}$ of $\mathbb{F}_q[x_1,\dots,x_n]^{S_n}$ given in Theorem~\ref{thm:main} is minimal (with respect to inclusion) for 
$q=3$, $4$, $5$ with arbitrary $n$, and for $q=7$ with 
$\log_7 n-\lfloor \log_7 n\rfloor<\log_7 5$ or $\log_7n-\lfloor \log_7 n\rfloor\ge\log_7 6$. 
In $\mathbb{F}_7[x_1,x_2,x_3,x_4,x_5]^{S_5}$,  $\{s_i^{(5)}\mid i=1,2,3,4\}$ is a minimal separating subset.  
\end{corollary}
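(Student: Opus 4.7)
The plan is to verify for $q\in\{3,4,5\}$ the hypothesis of Corollary~\ref{cor:minimality} --- namely, that $s_k^{(k)}$ is irreplaceable over $\mathbb{F}_q$ for every $k\in\{1,\dots,q-1\}$ --- and then to adapt the argument for $q=7$, where this hypothesis fails at $k=5$. Every needed instance for $q=3$ and $q=5$ is supplied by the explicit pairs $(v,w)$ in the tables of Section~\ref{subsec:some irreplaceables} together with Lemma~\ref{lemma:characterization}. The case $q=4$ is absent from those tables but is still easy: $k=1$ is trivial (take $v=[1]$, $w=[0]$), $k=3$ is covered by Section~\ref{subsec:k|p-1} since $3\mid q-1$, and $k=2$ is covered by the pigeonhole argument of Section~\ref{subsec:p=7} (the bound $q\ge k!-k+1$ reduces to $q\ge 1$). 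Corollary~\ref{cor:minimality} then gives minimality for $q\in\{3,4,5\}$ and arbitrary $n$.

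For $q=7$ one cannot invoke Corollary~\ref{cor:minimality} as a black box because $s_5^{(5)}$ is not irreplaceable. The plan is to replay that corollary's proof by hand, using Lemma~\ref{lemma:irreplaceable for all} together with the data of Section~\ref{subsec:some irreplaceables}: for $j\in\{1,2,3,4,6\}$ one has $s_j^{(j)}$ irreplaceable, hence $s_{j\cdot 7^k}^{(n)}$ is irreplaceable whenever $n\ge j\cdot 7^k$; and, crucially, for $j=5$ one has the stronger fact that $s_5^{(6)}$ is irreplaceable, which gives $s_{5\cdot 7^k}^{(n)}$ irreplaceable for $n\ge 6\cdot 7^k$. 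Thus every $s_m^{(n)}$ with $m\in[n]_7$ is irreplaceable precisely when, for every $k\ge 0$ with $5\cdot 7^k\le n$, one also has $6\cdot 7^k\le n$; equivalently, $n$ lies in none of the pairwise disjoint intervals $[5\cdot 7^k,\,6\cdot 7^k-1]$. Writing $t=\lfloor\log_7 n\rfloor$, any integer in the $k$-th such interval forces $k=t$, so the exclusion becomes $n<5\cdot 7^t$ or $n\ge 6\cdot 7^t$, which is the stated fractional-part condition on $\log_7 n$.

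The final assertion about $n=5$ is handled separately. The computer result in Section~\ref{subsec:p=7} records that $\{s_1^{(5)},s_2^{(5)},s_3^{(5)},s_4^{(5)}\}$ already separates the $S_5$-orbits in $\mathbb{F}_7^5$. Minimality is then immediate: Section~\ref{subsec:some irreplaceables} shows that $s_k^{(k)}$ is irreplaceable over $\mathbb{F}_7$ for $k=1,2,3,4$, and Corollary~\ref{cor:overfield} promotes each of these to irreplaceability of $s_k^{(5)}$, so none of the four polynomials can be removed.

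The only non-routine ingredient in this plan is the bookkeeping for $q=7$: reconciling the range in which $s_5^{(6)}$ forces irreplaceability ($n\ge 6\cdot 7^k$) with the range in which $5\cdot 7^k$ enters $[n]_7$ ($n\ge 5\cdot 7^k$). The gap between these two ranges is exactly the family of intervals $[5\cdot 7^k,\,6\cdot 7^k-1]$, and the fractional-part condition on $\log_7 n$ in the statement is simply the cleanest way to express that $n$ avoids that family.
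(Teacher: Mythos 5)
Your proposal is correct and follows essentially the paper's own route: the paper derives the corollary in one line from the explicit pairs of Section~\ref{subsec:some irreplaceables} (supplemented by the $k\mid q-1$ and pigeonhole subsections for $q=4$ and the computer result of Section~\ref{subsec:p=7} for $q=7$, $n=5$) via Lemma~\ref{lemma:characterization}, Lemma~\ref{lemma:irreplaceable for all} and Corollary~\ref{cor:minimality}, and your write-up merely makes explicit the $q=4$ case and the interval bookkeeping $[5\cdot 7^k, 6\cdot 7^k-1]$ behind the fractional-part condition. The only blemish is a mis-citation: the pigeonhole argument you invoke for $s_2^{(2)}$ over $\mathbb{F}_4$ is the unlabelled subsection on $s_k^{(k)}$ for $q\ge k!-k+1$, not Section~\ref{subsec:p=7}, which records the computer calculation.
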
 

In view of the above results it seems natural to formulate the following problems: 

\begin{problem} 
Is it true that for any prime $p$ and any $k\in \{1,2,\dots,p-1\}$ there exists a positive integer $n$ such that  $s_k^{(n)}$ is irreplaceable over $\mathbb{F}_p$? 
\end{problem} 

\begin{problem} 
Given a prime $p$ and positive integers $k\le n$, determine the minimal fields $\mathbb{F}_q$ of characteristic $p$ such that $s_k^{(n)}$ is irreplaceable over $\mathbb{F}_q$. 
\end{problem} 

\subsection{A bound for the distance from being minimal} 

We saw above that the separating set given in Theorem~\ref{thm:main} is not always  minimal. Moreover, even when it is minimal, it may not be of minimal possible cardinality. 
For example, for $n=9$ and $q=3$, the minimal cardinality of a separating set in 
$\mathbb{F}_3[x_1,\dots,x_9]^{S_3}$ is $\lceil \log_3\binom{9+2}{2}\rceil=4$ 
by \cite[Theorem 1.1]{KLR}, whereas 
the minimal separating set given in Corollary~\ref{cor:p=3,5,7} has cardinality 
$|[9]_3|=|\{1,2,3,6,9\}|=5$. Our aim here is to point out that however, for $q=p$ and for large $n$, the separating set given in Theorem~\ref{thm:main} is not much bigger than a separating set of minimal cardinality. 

Denote by $d_q(n)$ the difference of $|[n]_q|$ (the number  of elements in the separating subset 
of $\mathbb{F}_q[x _1,\dots,x_n]^{S_n}$ given in Theorem~\ref{thm:main}) 
and the number of elements in a separating subset of  
$\mathbb{F}_q[x _1,\dots,x_n]^{S_n}$ of minimal possible cardinality. 

\begin{proposition}\label{prop:d(n)} 
For any prime $p$ we have the inequality $d_p(n)\le p-2$. Consequently, one can get a minimal (with respect to inclusion) separating set in $\mathbb{F}_p[x_1,\dots,x_n]^{S_n}$ by removing at most $p-2$ elements from the separating set given in Theorem~\ref{thm:main}. 
\end{proposition}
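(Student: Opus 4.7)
The plan is to obtain a lower bound on the minimum cardinality $m$ of a separating set in $\mathbb{F}_p[x_1,\dots,x_n]^{S_n}$ and compare it with $|[n]_p|$. Since every invariant takes values in $\mathbb{F}_p$ and the natural bijection $\Pi_{p,n}\leftrightarrow \mathbb{F}_p^n/S_n$ yields $\binom{n+p-1}{p-1}$ orbits, any separating set of cardinality $t$ embeds the orbit space into $\mathbb{F}_p^t$, giving
\[m\ \ge\ \left\lceil \log_p \binom{n+p-1}{p-1}\right\rceil.\]
It therefore suffices to prove the strict inequality
\[\binom{n+p-1}{p-1}\ >\ p^{\,|[n]_p|-(p-1)},\]
because then $\lceil \log_p \binom{n+p-1}{p-1}\rceil \ge |[n]_p|-(p-2)$, and hence $d_p(n)=|[n]_p|-m\le p-2$.

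Write $K=\lfloor \log_p n\rfloor$ and $r=\lfloor n/p^K\rfloor\in\{1,\dots,p-1\}$. A direct count shows $|[n]_p|=K(p-1)+r$: for each $k\in\{0,\dots,K-1\}$ all the $p-1$ numbers $p^k,2p^k,\dots,(p-1)p^k$ lie in $[n]_p$, while for $k=K$ exactly $r$ of them do. The target inequality then reads $\binom{n+p-1}{p-1}>p^{(K-1)(p-1)+r}$. Using $n\ge rp^K$ I would estimate
\[\binom{n+p-1}{p-1}\ \ge\ \binom{rp^K+p-1}{p-1}\ =\ \frac{\prod_{i=1}^{p-1}(rp^K+i)}{(p-1)!}\ >\ \frac{(rp^K)^{p-1}}{(p-1)!}\ =\ \frac{r^{p-1}\,p^{K(p-1)}}{(p-1)!},\]
reducing the problem to the purely numerical claim
\[r^{p-1}\,p^{p-1-r}\ \ge\ (p-1)!\qquad\text{for every }r\in\{1,\dots,p-1\}.\]

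Because $\log\bigl(r^{p-1}p^{p-1-r}\bigr)=(p-1)\log r+(p-1-r)\log p$ is concave in $r$, the minimum over $[1,p-1]$ is attained at an endpoint. The case $r=p-1$ gives the obvious $(p-1)^{p-1}\ge (p-1)!$, so everything comes down to the inequality $p^{p-2}\ge (p-1)!$, which I expect to be the main (though elementary) step. It admits a quick induction on $p$: the base $p=2$ reads $1\ge 1$, and if $p^{p-2}\ge (p-1)!$ then $(p+1)^{p-1}\ge p^{p-1}=p\cdot p^{p-2}\ge p\cdot (p-1)!=p!$. The ``consequently'' clause is then immediate: if $T$ denotes the separating set from Theorem~\ref{thm:main} and $T'\subseteq T$ is a separating subset minimal with respect to inclusion, then $|T\setminus T'|=|T|-|T'|\le |T|-m=d_p(n)\le p-2$.
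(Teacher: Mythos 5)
Your proof is correct and follows essentially the same route as the paper: both lower-bound the minimal cardinality of a separating set by $\lceil\log_p\binom{n+p-1}{p-1}\rceil$, estimate the binomial coefficient from below by $r^{p-1}p^{K(p-1)}/(p-1)!$ with $n\ge rp^K$, and reduce to the elementary inequality $r^{p-1}p^{p-1-r}\ge (p-1)!$. The only (minor) divergence is in that last step, where the paper verifies the inequality directly via the telescoping product $\frac{(p-1)!}{r^{p-1}}=\prod_{j=1}^{p-1}\frac{j}{r}<p^{p-1-r}$, while you use concavity in $r$ plus an induction for the endpoint case $p^{p-2}\ge(p-1)!$; both are sound.
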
 

\begin{proof} By \cite[Theorem 1.1]{KLR} the minimal cardinality of a separating set in $\mathbb{F}_p[x_1,\dots,x_n]^{S_n}$ is the upper integer part of the logarithm with base $p$ of the number of $S_n$-orbits in $\mathbb{F}_p^n$. 
The number of $S_n$-orbits in $\mathbb{F}_p^n$ is $\binom{n+p-1}{p-1}$. 
There exists an $m\in\{1,\dots,p-1\}$ and $k\in \mathbb{Z}_{\ge 0}$ with 
\[mp^k\le n<(m+1)p^k.\] 
We have 
\begin{align}\label{eq:log p}  
\lceil\log_p\binom{n+p-1}{p-1}\rceil&\ge
\lceil\sum_{j=1}^{p-1}\log_p(mp^k+j)-\log_p((p-1)!)\rceil
\\ \notag &>(p-1)\log_p(mp^k)-\log_p((p-1)!)
\\ \notag &=(p-1)k+(p-1)\log_pm-\log_p((p-1)!)
\\ \notag &>(p-1)k+m-(p-1),  
\end{align} 
where the last inequality follows from 
\[\frac{(p-1)!}{m^{p-1}}=\frac{p-1}{m}\cdot \frac{p-2}{m}\cdots \frac{m+1}{m}\cdot \frac{m}{m}\cdot \frac{m-1}{m}\cdots \frac{1}{m}<p^{p-1-m}.\] 
On the other hand, the number of elements in the separating set given in Theorem~\ref{thm:main} is $k(p-1)+m$. Taking into account \eqref{eq:log p} we get 
\[d_pn)<k(p-1)+m-((p-1)k+m-(p-1))=p-1.\] 
\end{proof}


\section{Multisymmetric polynomials over $\mathbb{F}_q$} \label{sec:multisymmetric}

Consider the $m$-fold direct sum of the representation of $S_n$ on $F^n$. So the underlying vector space of this representation is 
$(F^n)^m=F^n\oplus\cdots\oplus F^n$. For $j=1,\dots,m$ and $i=1,\dots,n$ denote by $x_i^{(j)}$ the function mapping an $m$-tuple $(v^{(1)},\dots,v^{(m)})\in (F^n)^m$ of vectors to the $i$th coordinate of the $j$th vector component $v^{(j)}$. We get an induced action on the $nm$-variable polynomial algebra $A_{n,m}:=F[x_i^{(j)}\mid i=1,\dots,n;\ j=1,\dots,m]$ given by 
$\pi \cdot x_i^{(j)}=x_{\pi(i)}^{(j)}$.  The corresponding algebra 
$A_{n,m}^{S_n}$ of polynomial invariants is called 
\emph{the algebra of multisymmetric polynomials}. 
Our aim in this section is to give a separating set of multisymmetric polynomials, where a subset $T$ of $A_{n,m}^{S_n}$ is said to be \emph{separating} if for any $v,w\in (F^n)^m$ with different $S_n$-orbit there is a polynomial 
$f\in T$ with $f(v)\neq f(w)$.  

The algebra $A_{n,m}$ is $\mathbb{Z}_{\ge 0}^m$-graded: the multihomogeneous component 
$A_{n,m,\alpha}$ of $A_{n,m}$ of multidegree $\alpha=(\alpha_1,\dots,\alpha_m)\in\mathbb{Z}_{\ge 0}^m$ is spanned by polynomials all of whose non-zero terms have total degree $\alpha_j$ in the variables $x_1^{(j)},\dots,x_n^{(j)}$ for each $j=1,\dots,m$. The $S_n$-action preserves this multigrading, hence the algebra of multisymmetric polynomials is also multigraded: we have 
$A_{n,m}^{S_n}=\bigoplus_{\alpha\in\mathbb{Z}_{\ge 0}^m}A_{n,m,\alpha}^{S_n}$. 
Denote by $s_{k,\alpha}^{(n)}$ the component of multidegree $\alpha$ of 
$s_k^{(n)}(\sum_{j=1}^mx_1^{(j)},\dots,\sum_{j=1}^mx_n^{(j)})$. Clearly 
$s_{k,\alpha}^{(n)}$ is non-zero only if $\alpha_1+\cdots+\alpha_m=k$.  
The multisymmetric polynomials $s_{k,\alpha}^{(n)}$ are called the \emph{polarizations of the elementary symmetric polynomials}. They generate $A_{n,m}^{S_n}$ when the characteristic of the base field $F$ is greater than $n$ or is $0$ (in other words, when 
$\mathrm{char}(F)$ does not divide the order of the group $S_n$, see for example \cite{domokos} and the relevant references therein). 
However, when $0<\mathrm{char}(F)\le n$, the polarizations of the elementary symmetric polynomials are not suficient to generate the algebra of multisymmetric polynomials in general. In fact the maximal degree of an element in a minimal homogeneous generating 
system of $A_{n,m}^{S_n}$ tends to infinity together with $m$, see \cite{fleischmann}.    
For the modular case generating systems of $A_{n,m}^{S_n}$ are given in \cite{vaccarino}, 
\cite{domokos}.   A minimal homogeneous generating system is obtained in \cite{feshbach} 
for the case $F=\mathbb{F}_2$,  and in \cite{rydh} for an arbitrary base ring $F$. 

\begin{proposition}\label{prop:DKW} 
For any field $F$,  the following is a separating set in $A_{n,m}^{S_n}$:  
\[ \left\{\sum_{\alpha_2+2\alpha_3+\cdots+(m-1)\alpha_m=d}s_{k,\alpha}^{(n)} \ \mid \ 
k=1,\dots,n; \quad d=0,1,\dots,(m-1)n\right\}.\] 
\end{proposition}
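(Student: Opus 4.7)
\begin{proofof}{Proposition~\ref{prop:DKW}}
The plan is to run a standard polarization/specialization trick that converts the multisymmetric question into a univariate symmetric one over the transcendental extension $F(t)$. For $v=(v^{(1)},\dots,v^{(m)})\in (F^n)^m$ I would form the single vector $V(v)\in F[t]^n$ with coordinates
\[V(v)_i := v_i^{(1)} + t\, v_i^{(2)} + \cdots + t^{m-1} v_i^{(m)}, \qquad i=1,\dots,n.\]

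The first step is to verify the identity
\[s_k^{(n)}(V(v)) = \sum_{d\ge 0} t^{d} \sum_{\substack{\alpha_1+\cdots+\alpha_m=k \\ \alpha_2+2\alpha_3+\cdots+(m-1)\alpha_m=d}} s_{k,\alpha}^{(n)}(v),\]
which follows at once from substituting $x_i^{(j)}\mapsto t^{j-1}v_i^{(j)}$ in $s_k^{(n)}(\sum_j x_1^{(j)},\dots,\sum_j x_n^{(j)})$: the multihomogeneous piece $s_{k,\alpha}^{(n)}$ is scaled by $t^{\alpha_2+2\alpha_3+\cdots+(m-1)\alpha_m}$, and these weights stratify the expansion. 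Since $\deg_t s_k^{(n)}(V(v))\le (m-1)k\le (m-1)n$, the hypothesis that the listed polynomials agree on $v$ and $w$ is exactly the statement that $s_k^{(n)}(V(v))=s_k^{(n)}(V(w))$ as elements of $F[t]$ for every $k=1,\dots,n$.

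Next I would invoke the field-independent fact that $\{s_1^{(n)},\dots,s_n^{(n)}\}$ separates $S_n$-orbits on $K^n$ for \emph{any} field $K$: the values $s_k^{(n)}(V)$ are, up to sign, the coefficients of $\prod_{i=1}^n(x-V_i)\in K[x]$, which determines the multiset of roots. Applying this over $K=F(t)$ yields a permutation $\pi\in S_n$ with $V(v)_{\pi^{-1}(i)}=V(w)_i$ in $F(t)$ for every $i$. Comparing coefficients of $t^{j-1}$ in these equalities gives $v_{\pi^{-1}(i)}^{(j)}=w_i^{(j)}$, i.e.\ $\pi\cdot v^{(j)}=w^{(j)}$ for each $j=1,\dots,m$, so $v$ and $w$ belong to the same $S_n$-orbit in $(F^n)^m$. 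There is no serious obstacle in this argument; the only point worth watching is that the separating property of the $s_k^{(n)}$ is used over the infinite field $F(t)$ rather than over the finite base $F$, which is needed because the hypothesis delivers an identity of polynomials in $t$, not merely an identity of evaluations.
\end{proofof}
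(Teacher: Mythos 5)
Your proof is correct. It is worth noting, though, that it takes a different (and more self-contained) route than the paper: the paper's proof is essentially a one-line citation of Draisma--Kemper--Wehlau's theorem that the ``cheap polarizations'' of a generating (or separating) set of $F[x_1,\dots,x_n]^{S_n}$ form a separating set of $A_{n,m}^{S_n}$, together with the observation that the polynomials listed in the statement are exactly the cheap polarizations of the $s_k^{(n)}$. What you have done is unfold that citation into a direct argument in this special case: the substitution $x_i^{(j)}\mapsto t^{j-1}x_i^{(j)}$ and the grouping of multidegrees by the weight $\alpha_2+2\alpha_3+\cdots+(m-1)\alpha_m$ is precisely the cheap-polarization mechanism, and you then exploit the specific fact that the elementary symmetric polynomials separate $S_n$-orbits over \emph{every} field (via unique factorization of $\prod_i(x-V_i)$ in $F(t)[x]$) to close the argument over the transcendental extension. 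Your version buys transparency --- in particular it makes visible why the bound $d\le (m-1)n$ suffices and why no hypothesis on $F$ is needed --- at the cost of reproving a known general result; the paper's version is shorter and situates the construction within the existing polarization literature. All the steps you flag (the identity for $s_k^{(n)}(V(v))$, the degree bound in $t$, the passage to $K=F(t)$, and the recovery of $\pi$ by comparing coefficients of $t^{j-1}$) check out.
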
 
\begin{proof} 
The elementary symmetric polynomials generate 
$F[x_1,\dots,x_n]^{S_n}$, therefore by \cite[Theorem 3.4]{draisma-kemper-wehlau} their ``cheap polarizations" form a separating set. It is easy to see that the cheap polarizations of the elementary symmetric polynomials are the polynomials given in the statement. 
\end{proof}  

Let us recall another class of multisymmetric polynomials. For any $\alpha\in \mathbb{Z}_{\ge 0}^m$ set 
\[s_k^{(n)}(x^{\alpha}):=s_k^{(n)}(\prod_{j=1}^m{x_1^{(j)}}^{\alpha_j},\dots,
\prod_{j=1}^m{x_n^{(j)}}^{\alpha_j}).\] 
Note that the formulae from \cite{amitsur} were used in \cite[Equation (6)]{domokos} to express the polynomials $s_{k,\alpha}^{(n)}$ in terms of the polynomials $s_k^{(n)}(x^{\gamma})$. 
Write $|\alpha|:=\sum_{j=1}^m\alpha_j$,  
and 
denote by $\mathrm{gcd}(\alpha)$ the greatest common divisor of $\alpha_1,\dots,\alpha_m$. 

\begin{proposition}\label{prop:amitsur} 
For any field $F$ the elements $s_k^{(n)}(x^{\alpha})$ with 
$\alpha\in \mathbb{Z}_{\ge 0}^m$, $k|\alpha|\le n$, $\mathrm{gcd}(\alpha)=1$ 
constitute a separating set in $A_{n,m}^{S_n}$. 
\end{proposition}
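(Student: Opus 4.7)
The plan is to reduce Proposition~\ref{prop:amitsur} to Proposition~\ref{prop:DKW} through the Amitsur--Domokos identity mentioned just above the statement. First I would observe that since the cheap polarizations in Proposition~\ref{prop:DKW} form a separating set and each cheap polarization is a sum of certain individual polarizations $s_{k,\alpha}^{(n)}$, the individual polarizations themselves form a separating set: two orbits on which all $s_{k,\alpha}^{(n)}$ agree also agree on the cheap sums and hence coincide. Consequently, it suffices to show that every $s_{k,\alpha}^{(n)}$ can be written as a polynomial with integer coefficients in the invariants of the set claimed in the proposition.

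I would then invoke \cite[Equation~(6)]{domokos}, which expresses each polarization $s_{k,\alpha}^{(n)}$ (with $|\alpha|=k\le n$) as a $\mathbb{Z}$-polynomial in elements of the form $s_\ell^{(n)}(x^\gamma)$. Since both sides are multihomogeneous of multidegree $\alpha$, each monomial $\prod_i s_{\ell_i}^{(n)}(x^{\gamma_i})^{c_i}$ appearing on the right must satisfy $\sum_i c_i\ell_i|\gamma_i|=k\le n$, and in particular $\ell_i|\gamma_i|\le n$ for each $i$. To further reduce to $\mathrm{gcd}(\gamma_i)=1$, for any $\gamma$ with $\mathrm{gcd}(\gamma)=d>1$ I would write $\gamma=d\beta$ with $\mathrm{gcd}(\beta)=1$ and use
\[
s_\ell^{(n)}(x^\gamma)=s_\ell^{(n)}\bigl((x^\beta)^d\bigr),
\]
which is a symmetric polynomial in $x_1^\beta,\dots,x_n^\beta$ of total degree $\ell d$ in those arguments. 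By the fundamental theorem of symmetric polynomials over $\mathbb{Z}$, it is a $\mathbb{Z}$-polynomial in $s_1^{(n)}(x^\beta),\dots,s_n^{(n)}(x^\beta)$, and a homogeneity check shows only $s_j^{(n)}(x^\beta)$ with $j\le \ell d$ can appear; each such satisfies $j|\beta|\le \ell d|\beta|=\ell|\gamma|\le n$, so it lies in the set of the proposition.

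Combining these reductions, every polarization $s_{k,\alpha}^{(n)}$ is expressible as a polynomial in $\{s_\ell^{(n)}(x^\gamma):\ell|\gamma|\le n,\,\mathrm{gcd}(\gamma)=1\}$, so two $S_n$-orbits agreeing on this set must also agree on all polarizations and by the first paragraph they coincide. The main obstacle is the degree bookkeeping inside the Amitsur--Domokos identity: one must confirm from the explicit form of \cite[Eq.~(6)]{domokos} that the identity is indeed multihomogeneous, so that the constraint $\ell_i|\gamma_i|\le n$ is forced on all occurring indices. The reduction to $\mathrm{gcd}(\gamma_i)=1$ and the final invocation of Proposition~\ref{prop:DKW} are then routine.
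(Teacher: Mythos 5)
Your argument is correct, but it is organized differently from the paper's. The paper's proof is a short abstraction: Proposition~\ref{prop:DKW} exhibits a separating set consisting of invariants of degree at most $n$, hence \emph{all} invariants of degree at most $n$ separate; in a graded algebra every homogeneous element of degree at most $n$ is a polynomial in the degree at most $n$ members of any homogeneous generating system, so those members already separate; and \cite[Corollary 5.3]{domokos} supplies the generating system consisting of the $s_k^{(n)}(x^{\gamma})$ with $\mathrm{gcd}(\gamma)=1$, whose degree at most $n$ part is exactly the set in the statement. You bypass the citation of the generating result and instead verify the needed containment by hand: you reduce the cheap polarizations to the individual polarizations $s_{k,\alpha}^{(n)}$, expand these via the Amitsur identity \cite[Equation (6)]{domokos} into the $s_\ell^{(n)}(x^{\gamma})$, and dispose of the case $\mathrm{gcd}(\gamma)=d>1$ by applying the fundamental theorem of symmetric polynomials to $s_\ell^{(n)}(y_1^d,\dots,y_n^d)$ before substituting $y_i=x_i^{\beta}$. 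The ``main obstacle'' you flag is not really one: since $s_{k,\alpha}^{(n)}$ and every monomial in the $s_\ell^{(n)}(x^{\gamma})$ are multihomogeneous, you may simply project the identity onto its multidegree $\alpha$ component, and every surviving monomial automatically satisfies $\sum_i c_i\ell_i|\gamma_i|=|\alpha|=k\le n$, which forces $\ell_i|\gamma_i|\le n$. What your route buys is self-containedness, needing only the identity \cite[Equation (6)]{domokos} (essentially unpacking the proof of \cite[Corollary 5.3]{domokos}); what the paper's route buys is brevity and the reusable observation that the degree at most $n$ part of any homogeneous generating system is separating once the degree at most $n$ invariants are.
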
 
\begin{proof}  
Proposition~\ref{prop:DKW} implies in particular  that the elements of $A_{n,m}^{S_n}$ with degree at most $n$ form a separating subset. It follows that the elements of degree at most $n$ from any homogeneous system of generators of the algebra $A_{n,m}^{S_n}$ form a separating set. Applying this for the generating system given in \cite[Corollary 5.3]{domokos} 
we obtain the desired statement. 
\end{proof} 

\begin{theorem}\label{thm:multisymmetric} 
For the field $F=\mathbb{F}_q$ of $q$ elements the following is a separating set in $A_{n,m}^{S_n}$: 
\begin{align}\label{eq:multisymm sep}\{s_{jp^k}^{(n)}(x^{\alpha})\mid \quad & j\in \{1,\dots,q-1\},\quad \alpha\in \mathbb{Z}_{\ge 0}^m,\quad |\alpha|\le n,\quad \mathrm{gcd}(\alpha)=1,
\\ \notag  & \alpha_j\le q-1 \text{ for }j=1,\dots,m, 
\quad k\in \{0,1,\dots,\lfloor \log_p\frac{n}{|\alpha|}\rfloor\}\}.
\end{align} 
\end{theorem}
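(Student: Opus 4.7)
The plan is to reduce Theorem~\ref{thm:multisymmetric} to Proposition~\ref{prop:amitsur} by showing that for any $\beta\in\mathbb{Z}_{\ge 0}^m$ with $\gcd(\beta)=1$ and any $k$ with $k|\beta|\le n$, the value $s_k^{(n)}(x^\beta)(v)$ is determined by the values of the polynomials listed in \eqref{eq:multisymm sep} on $v$. This involves two refinements of the separating set in Proposition~\ref{prop:amitsur}: one on the exponent vector $\alpha$ (to force $\alpha_j\le q-1$), and one on the index $k$ (to restrict to indices of the form $jp^i$). The latter is directly handled by Corollary~\ref{cor:s_t} once the former is in place.

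For the exponent reduction, I use $a^q=a$ in $\mathbb{F}_q$ to replace each $\beta_j>0$ by the unique $\bar\beta_j\in\{1,\dots,q-1\}$ congruent to $\beta_j$ modulo $q-1$ (and $\bar\beta_j=0$ when $\beta_j=0$). Then $s_k^{(n)}(x^\beta)$ and $s_k^{(n)}(x^{\bar\beta})$ coincide as functions on $(\mathbb{F}_q^n)^m$. The subtle point is that $\bar\beta$ may no longer be primitive; setting $d:=\gcd(\bar\beta)$ and $\tilde\beta:=\bar\beta/d$, the primitive vector $\tilde\beta$ satisfies $\tilde\beta_j\le q-1$ and hence belongs to the index set of \eqref{eq:multisymm sep}, but the cost is that $s_k^{(n)}(x^\beta)(v)=e_k\bigl((x^{\tilde\beta}(v))^d\bigr)$, the $k$-th elementary symmetric polynomial evaluated at the componentwise $d$-th powers of $x^{\tilde\beta}(v)\in\mathbb{F}_q^n$ rather than at $x^{\tilde\beta}(v)$ itself.

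To handle this I invoke the fundamental theorem of symmetric polynomials: $e_k(u_1^d,\dots,u_n^d)$ is a weighted-homogeneous polynomial of weight $kd$ in $e_1(u),\dots,e_n(u)$ (with $\deg e_j=j$), so it involves only $e_j(u)$ for $j\le kd$. The bookkeeping $k|\beta|\le n$ and $|\bar\beta|\le|\beta|$ gives $kd|\tilde\beta|=k|\bar\beta|\le n$, hence $kd\le\lfloor n/|\tilde\beta|\rfloor$. The hypothesis of the theorem provides $s_{jp^i}^{(n)}(x^{\tilde\beta})(v)=s_{jp^i}^{(n)}(x^{\tilde\beta})(w)$ for all $j\in\{1,\dots,q-1\}$ and $i\le\lfloor\log_p(n/|\tilde\beta|)\rfloor=\lfloor\log_p\lfloor n/|\tilde\beta|\rfloor\rfloor$, so Corollary~\ref{cor:s_t}(ii) applied for each integer $t\le kd$ yields $e_j(x^{\tilde\beta}(v))=e_j(x^{\tilde\beta}(w))$ for $j\le kd$, whence $s_k^{(n)}(x^\beta)(v)=s_k^{(n)}(x^\beta)(w)$ as needed. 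The main obstacle is the possible failure of primitivity in the exponent reduction: the $d$-th power that appears when passing from $\bar\beta$ to $\tilde\beta$ has to be absorbed, and the weighted-degree identity from the fundamental theorem is precisely what does this while keeping the required bound $kd\le\lfloor n/|\tilde\beta|\rfloor$ consistent with the range of $i$ allowed in \eqref{eq:multisymm sep}.
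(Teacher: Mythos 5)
Your proof is correct, and its skeleton coincides with the paper's: both arguments reduce the statement to Proposition~\ref{prop:amitsur}, use $a^q=a$ to bring the exponent vector into $\{0,\dots,q-1\}^m$, and invoke Corollary~\ref{cor:s_t} to restrict the indices to the form $jp^k$. The one place where you genuinely diverge is the treatment of primitivity. The paper argues contrapositively on a separating pair $(t,\alpha)$ with $|\alpha|$ minimal, and disposes of the case $\gcd(\alpha)=e>1$ by citing the fact (see \cite[Page 517]{domokos}) that $s_t^{(n)}(x^{e\gamma})$ is a polynomial in the $s_u^{(n)}(x^{\gamma})$ with $|\gamma|<|\alpha|$, which contradicts minimality. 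You instead prove determination directly: after extracting $d=\gcd(\bar\beta)$ you express $e_k(u_1^d,\dots,u_n^d)$ via the fundamental theorem of symmetric polynomials as a weight-$kd$ polynomial in $e_1(u),\dots,e_{kd}(u)$, and the bookkeeping $kd\,|\tilde\beta|=k|\bar\beta|\le k|\beta|\le n$ guarantees that Corollary~\ref{cor:s_t}(ii) applies to every index $t\le kd$ using only invariants from \eqref{eq:multisymm sep}. This buys a self-contained argument that makes explicit the degree control which the paper delegates to the external reference and to the minimality device; the paper's version is shorter but less transparent about why the relevant auxiliary invariants stay within the allowed range of $k$. Both routes are valid.
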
 

\begin{proof} 
Suppose that $v,w\in (\mathbb{F}_q^n)^m$ belong to different $S_n$-orbits. 
By Proposition~\ref{prop:amitsur} there exist a $t\in\{1,\dots,n\}$, 
$\alpha\in \mathbb{Z}_{\ge 0}^m$ with $t|\alpha|\le n$ such that 
$s_t^{(n)}(v^{\alpha})\neq s_t^{(n)}(w^{\alpha})$, where 
$v^{\alpha}$ (respectively $w^{\alpha}$) stands for the vector in $\mathbb{F}_q^n$ whose $i$th coordinate is 
$\prod_{j=1}^m{v_i^{(j)}}^{\alpha_j}$ (respectively $\prod_{j=1}^m{w_i^{(j)}}^{\alpha_j}$).  
We may assume that $|\alpha|$ is minimal possible. 
Then $\alpha_j\le q-1$, because for all $j$ with $\alpha_j>0$, denoting by 
$\gamma_j$ the unique element of $\{1,\dots,q-1\}$ which is congruent to $\alpha_j$ modulo $q-1$ (and setting $\gamma_j=0$ if $\alpha_j=0$), we have 
$v^{\alpha}=v^{\gamma}$ and $w^{\alpha}=w^{\gamma}$. 
Thus by minimality of $|\alpha|$ we must have $\alpha_j\in \{0,1,\dots,q-1\}$ for al $j$. 
Moreover, we must have $\mathrm{gcd}(\alpha)=1$, since otherwise $s_t^{(n)}(x^{\alpha})$ can be expressed as a polynomial of elements of the form $s_u^{(n)}(x^{\gamma})$ with $|\gamma|< |\alpha|$ (see \cite[Page 517]{domokos} for explanation), and some $s_u^{(n)}(x^{\gamma})$ with $|\gamma|<|\alpha|$ would separate $v$ and $w$, contrary to the minimality of $|\alpha|$. 
Finally, it follows by Corollary~\ref{cor:s_t} (ii) that there exists a $j\in \{1,\dots,q-1\}$ and 
$k\in \{0,1,\dots,\lfloor \log_pt\rfloor\}$ such that 
$s_{jp^k}^{(n)}(v^{\alpha})\neq s_{jp^k}^{(n)}(w^{\alpha})$. 
So we showed that whenever  $v,w\in (\mathbb{F}_q^n)^m$ have different $S_n$-orbit, then 
$v$ and $w$ can be separated by an element from \eqref{eq:multisymm sep}. 
\end{proof} 

\begin{remark} 
The separating set given in Theorem~\ref{thm:multisymmetric} for $F=\mathbb{F}_q$ exploits Theorem~\ref{thm:main}. For fixed $q$ and $m$ and ``sufficiently large $n$"  it is significantly smaller than the separating sets given in 
Proposition~\ref{prop:DKW} or Proposition~\ref{prop:amitsur} for general $F$, see 
Example~\ref{example:26} for illustration. 
On the other hand, in the special case $q=2$ a stronger result is known, since 
in \cite[Theorem 4.8]{KLR} a minimal separating subset of $A_{n,m}^{S_n}$ is determined 
for $F=\mathbb{F}_2$; this separating set is a proper subset of the one given by the special case $q=2$ of our Theorem~\ref{thm:multisymmetric}, as it involves a stronger upper bound for the parameter $k$ in the multisymmetric polynomials included in the separating set.  
\end{remark} 

\begin{example} \label{example:26} 
\begin{itemize} 
\item[(i)] Take $q=3$, $m=2$, and $n=26$. The possible $\alpha$ to consider in the separating set 
\eqref{eq:multisymm sep} in Theorem~\ref{thm:multisymmetric} for 
$A_{26,2}^{S_{26}}$  are $\alpha=(1,0)$, $(0,1)$, $(1,1)$, $(2,1)$, $(1,2)$. 
The corresponding numbers $\lfloor \log_3\frac{26}{|\alpha|}\rfloor$ are 
$2,2,2,1,1$. Thus in this case the separating set \eqref{eq:multisymm sep} has 
$2\cdot (3+3+3+2+2)=26$ elements. The separating set of $A_{n,m}^{S_n}$ given in 
Proposition~\ref{prop:DKW} has $n(n(m-1)+1)$ elements in general, so it has 
$26\cdot 27=702$ elements in our case $n=26$, $m=2$. 

On the other hand, the number of $S_{26}$-orbits in $\mathbb{F}_3^{26}\oplus \mathbb{F}_3^{26}$ is $\binom{34}{8}=18156204$. 
Thus by  \cite[Theorem 1.1]{KLR}, there is a separating set in $A_{26,2}^{S_{26}}$ 
consisting of $\lceil \log_3\binom{34}{8}\rceil=16$ multisymmetric polynomials. 
\item[(ii)] Take $q=3$, $m=2$, and $n=8$. 
The separating set \eqref{eq:multisymm sep} for $A_{8,2}^{S_8}$ in Theorem~\ref{thm:multisymmetric} has $16$ elements, the separating set given in 
Proposition~\ref{prop:amitsur} has $44$ elements, whereas the separating set given in 
Proposition~\ref{prop:DKW} has $72$ elements. The minimal cardinality of a separating set for 
$A_{8,2}^{S_8}$ over $\mathbb{F}_3$ is $9$. 
\end{itemize} 
\end{example} 

\section{Comment on lacunary polynomials}

Lemma~\ref{lemma:characterization} has the following reformulation in terms of polynomials:  
The elementary symmetric polynomial $s_k^{(n)}\in F[x_1,\dots,x_n]$ is irreplaceable if there exist two polynomials $f,g\in F[x]$ having degree at most $n$ and constant term $1$ such that both $f$ and $g$ split as a product of linear factors over $F$, the degree $k$ coefficient of $f$ differs from the degree $k$ coefficient of $g$, and all the other coefficients of $f$ coincide with the corresponding coefficient of $g$. This is reminiscent of the topic of the book \cite{redei}, where lacunary polynomials that split as a product of root factors over $\mathbb{F}_q$ are studied. 

In particular, our 
Corollary~\ref{cor:i_1=0} has similar flavour as the following theorem of R\'edei 
\cite[Paragraph 10]{redei}: Suppose that the polynomial $f(x)=x^q+g(x)$ splits as a product of root factors over the field $\mathbb {F}_q$, and the formal derivative of $f$ is non-zero. Then $\deg(g)\ge (q+1)/2$ or $f(x)=x^q-x$. For applications of this result in finite geometry see \cite{szonyi}.


\end{document}